\theoremstyle{plain}
\newtheorem{theorem}{Theorem}[section]
\newtheorem*{theorem*}{Theorem}
\newtheorem{lemma}[theorem]{Lemma}
\newtheorem{proposition}[theorem]{Proposition}
\newtheorem*{proposition*}{Proposition}
\theoremstyle{definition}
\newtheorem{definition}[theorem]{Definition}
\theoremstyle{remark}
\newcommand{\Ca}{{\db^{-1}}}
\newcommand{\Cab}{{\d^{-1}}}
\renewcommand{\d}{\partial}
\newcommand{\db}{\overline{\partial}}
\newcommand{\abs}[1]{\left\lvert #1 \right\rvert}
\newcommand{\norm}[1]{\left\lVert #1 \right\rVert}
\newcommand{\R}{\mathbb{R}}
\newcommand{\C}{\mathbb{C}}
\begin{document}
\title{Stability and uniqueness for
a two-dimensional inverse boundary value problem for
less regular potentials}

\author{E. Bl{\aa}sten\thanks{{Department of Mathematics, Tallinn University
of Technology, Ehitajate tee 5, 19086 Tallinn, Estonia,
e-mail: {\tt eemeli.blasten@iki.fi}}},
O.~Yu.~Imanuvilov\thanks{Department of Mathematics, Colorado State
University, 101 Weber Building, Fort Collins, CO 80523-1874, U.S.A.,
e-mail: {\tt oleg@math.colostate.edu}\
Partially supported by NSF grant DMS 1312900},
 and
M.~Yamamoto\thanks{ Department of Mathematical Sciences, The University
of Tokyo, Komaba, Meguro, Tokyo 153, Japan,
e-mail:
{\tt myama@ms.u-tokyo.ac.jp}}
}

\maketitle

\begin{abstract}
We consider inverse boundary value problems for the Schr\"odinger equations
in two dimensions.   Within less regular classes of potentials,
we establish a conditional stability estimate of logarithmic order.
Moreover we prove the uniqueness within $L^p$-class of potentials
with $p>2$.
\end{abstract}

In this paper, we prove stability estimates and the uniqueness for
an inverse boundary value problem for the  two-dimensional Schr\"odinger
equation within a class of less regular unknown potentials.
We refer to the first result Sylvester and Uhlmann \cite{SU}
in the case where dimensions are higher than or equal to three,
and since then many remarkable works concerning the
uniqueness have been published.  Here we do not intend to create 
a complete list of
publications and see e.g., a survey by Uhlmann \cite{Ul}.
In particular, the arguments in two dimensions are different from
higher dimensions and we refer to the uniqueness result by Nachman \cite{Na2},
and a stability estimate by Alessandrini \cite{Al}.
Also see Liu \cite{Liu}, and as survey on the uniqueness
mainly in two dimensions, see Imanuvilov and Yamamoto
\cite{IY2}.
So far all these estimates have had a logarithmic modulus of continuity, which
is no surprise because Mandache showed that this is the best one could expect
\cite{Man}.
The other fact is that most of the above mentioned work was done
for the conductivity equation, and so there were not many papers
on inverse boundary value problems for the Schr\"odinger equation with a
potential in two dimensions.
The result on uniqueness in this paper (Theorem \ref{L2}) was
announced by a pioneering contribution (Bukhgeim \cite{Bu}) that has 
led to many developments in the study of two dimensional inverse boundary 
value problems. However, his proof only gives uniqueness for potentials in the
class $W^1_p$ as pointed out in Bl{\aa}sten's licentiate thesis \cite{BlaLic}.
See also Novikov and Santacesaria \cite{NS1}, which proved stability assuming
some smoothness and \cite{NS2} which showed also a reconstruction formula.
Santacesaria \cite{San} continued working on stability, and
showed that the smoother it is, the better exponent there will be on the 
logarithm.

There are not many results about stability and uniqueness for
less regular potentials and we refer to  Bl{\aa}sten \cite{Bla},
and Imanuvilov and Yamamoto \cite{IY}.
The former is the doctoral thesis  of the first named author
and proved conditional stability under some a priori boundedness 
of unknown potentials, and the latter proved 
the uniqueness in determining $L^p$-potentials with $p>2$.

In this paper we prove the uniqueness result announced by Bukhgeim for $L^p$
potentials, $p>2$, and in addition give logarithmic type stability estimates
for potentials in the class $W^s_2$, $s \in (0,1]\setminus\{\frac{1}{2}\}$.
After \cite{Bla} and \cite{IY}, the authors recognized 
that an improvement and simplification of the proofs are possible.
That is, the main purpose of this paper is to improve the stability 
estimates obtained in \cite {Bla} and simplify the proof of \cite{IY} 
by using a unified method.

The paper is composed of six sections.
In Section 2, we formulate our inverse problem and in Section 3 we state
two main results Theorems 2.1 on the conditional stability and
Theorem 2.2 on the uniqueness and compare them with the results in 
\cite{Bla} and \cite{IY}.  Sections 3-6 are devoted for completing the proofs
of Theorems 2.1 and 2.2.

\section{Formulation}
\label{introSect}
Let $X \subset \R^2$ be a bounded domain with boundary
$\partial X$ of $C^{\infty}$-class.
Although it is possible to relax the regularity of the boundary for example
to a Lipschitz domain, we assume $C^{\infty}$-boundary for simplicity.
Moreover let $q \in L^p(X)$, $p>2$, be a potential function.
Consider the Schr\"odinger operator with the potential $q$ in the domain
$X$
\[
L_q(x,D)u := \Delta u + q u.
\]

We define define the \emph{Cauchy data} $\mathcal{C}_q$ by
\begin{definition}
{\it Let $X \subset \R^2$ be a bounded domain with smooth boundary
$\partial X$ and $q \in
L^p(X)$ with $p > 1$. Then
\[
\mathcal{C}_q = \{ (u, \partial_\nu u) \in W_2^{1/2}(\partial X) \times
W_2^{-1/2}(\partial X);\thinspace
L_q(x,D) u = 0, u \in W^{1}_2(X)\}.
\]}
\end{definition}
If zero is not an eigenvalue of the operator $L_q(x,D)$ with the
zero Dirichlet boundary conditions, then the Cauchy data are
equivalent to the Dirichlet-to-Neumann map $\Lambda_q$ defined by
$$
\Lambda_q f = \frac{\partial u}{\partial\nu}\vert_{\partial X},
\quad f \in W^{1/2}_2(\partial X),
$$
where $u \in W^1_2(X)$ is a unique solution to
$L_q(x,D)u = 0$ in $X$ and $u\vert_{\partial X}
= f$.

The paper is concerned with a variant of the  classical Calder\'on problem:
{\it Suppose that for two potentials $q_1$ and $q_2$ the corresponding Cauchy
data are equal. Does that imply the uniqueness of the potentials?}

The inverse problem asks whether the mapping $q \mapsto \mathcal{C}_q$ is
invertible.  The uniqueness means that no two different potentials
$q$ have the same Cauchy data $\mathcal{C}_q$.   The stability means
that the mapping inverse to $q \mapsto \mathcal{C}_q$ is continuous in some 
topologies.  For formulating the
stability, we define the difference of Cauchy data by
$$
d(\mathcal{C}_{q_1}, \mathcal{C}_{q_2})
:= \sup_{(u_1,u_2)\in \mathcal X_{q_1}\times \mathcal X_{q_2}}
\abs{\int_X u_1(q_1-q_2)u_2dx} ,
$$
where
$$
\mathcal X_{q}=\{ u \in W^1_2(X);\thinspace L_q(x,D)u = 0, \thinspace
\norm{u}_{W^1_2(X)} = 1 \}.
$$
The difference $d(\mathcal{C}_{q_1}, \mathcal{C}_{q_2})$ is not a metric,
but if $\mathcal{C}_{q_1} = \mathcal{C}_{q_2}$ then $d(\mathcal{C}_{q_1}, 
\mathcal{C}_{q_2}) = 0$.  Moreover if zero is not an eigenvalue of
the operator $L_{q_j}(x,D)$, $j=1,2$ with the zero Dirichlet boundary
condition,  then
\[
d(\mathcal{C}_{q_1}, \mathcal{C}_{q_2}) \leq C \norm{\Lambda_{q_1} 
- \Lambda_{q_2}}_{\mathcal L(W_2^{1/2}(\partial X);W_2^{-1/2}(\partial X))}
\]
by Lemma 3.2 proved below.  Here the right-hand side denotes
the operator norm.
This inequality means that for given
$\mathcal{C}_{q_1}$ and $\mathcal{C}_{q_2}$, without knowing
$q_1, q_2$ in $X$, it is possible to calculate
an upper bound for $d(\mathcal{C}_{q_1}, \mathcal{C}_{q_2})$.

Usually one can show only \emph{conditional stability}, which means stability
under some assumptions on norms of unknown potentials  $q$'s.
Other important topic is the reconstruction of a potential. That is,
given a Cauchy data, reconstruct the potential using an explicit algorithm,
and an even more valuable goal is to reconstruct $q$ in a stable way by given
noisy data about $\mathcal{C}_q$.
As for the reconstruction of less regular potentials, see
Astala, Faraco and Rogers \cite{Ast}, which shows a reconstruction formula for
potentials in $W_2^{1/2}$, and proves that
there exists a set of positive measure
where the reconstruction does not converge pointwise for less regular
potentials.  Our proof suggests that the reconstruction converges in the
$L^2$-norm and we here do not discuss details.

{\bf Notations.} Let $i=\sqrt{-1}$, $x=(x_1,x_2), x_1, x_2 \in \mathbb{
R}^1$, $z=x_1+ix_2$ and $\overline{z}$ denote the complex conjugate of
$z \in \mathbb{C}$. We identify $x  \in \mathbb{R}^2$ with $z = x_1 +ix_2
\in \mathbb{C}$ and $\xi=(\xi_1,\xi_2)$ with $\zeta=\xi_1+i\xi_2$.
We set $\partial_z = \frac 12(\partial_{x_1}-i\partial_{x_2})$,
$\partial_{\bar z} =
\frac12(\partial_{x_1}+i\partial_{x_2}).$   By $\mathcal L(Y_1,Y_2)$
we denote the space of linear continuous operators from a Banach
space $Y_1$ into a Banach space $Y_2$.
Let $B(0,\delta)$ be a ball in $\mathbb{R}^2$ of radius $\delta$
centered at $0.$
We define the Fourier transform by
$(\mathscr{F}u)(\xi) = \int_{\R^2} u(x)e^{-i(x,\xi)} dx$. \\
\vspace{0.3cm}
\bigskip

\section{Main results}
Henceforth $C>0$ denotes generic constants which are dependent on
$X$ and constants $s, M$, but independent of parameters $\tau$,
where $s, M, \tau$ are given later.

We here state our two main results.

\begin{theorem}\label{L1}
Let $X \subset \R^2$ be a bounded domain with smooth boundary
$\partial X$ and $s \in (0,1]\setminus\{\frac 12\}$.  We assume that $q_1, q_2
\in W^{s}_2(X)$ satisfy an a priori estimate
$\Vert q_j\Vert_{W^{s}_2(X)} \leq M$ with $M < \infty$
and $q_1-q_2 \in \mathaccent'27{W}^s_2(X)$.
Then there exists a constant $C>0$ such that
\[
\Vert q_1 - q_2\Vert_{L^2(X)} \le
\left\{
\begin{array}{lll}
C\left(1+ \ln \frac{1}{d(\mathcal{C}_{q_1}, \mathcal{C}_{q_2})}\right)
^{-s/2}, \quad
&\mbox{if $d(\mathcal{C}_{q_1}, \mathcal{C}_{q_2}) < 1$},\\
Cd(\mathcal{C}_{q_1}, \mathcal{C}_{q_2}), \quad &\mbox{if
$d(\mathcal{C}_{q_1}, \mathcal{C}_{q_2})\ge 1$}.
\end{array}
\right.
\]
\end{theorem}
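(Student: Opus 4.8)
The plan is to follow the Bukhgeim-type approach using complex geometrical optics (CGO) solutions concentrated near a point, combined with quantitative stability bookkeeping. First I would construct, for each $z_0 \in X$ and each large parameter $\tau > 0$, special solutions $u_j \in \mathcal X_{q_j}$ of the form $u_1 = e^{\tau \Phi(z)}(a(z) + r_1(z;\tau))$ and $u_2 = e^{-\tau \overline{\Phi(z)}}(\overline{a(z)} + r_2(z;\tau))$, where $\Phi(z) = (z-z_0)^2$ is a holomorphic phase with a nondegenerate critical point at $z_0$, $a$ is a suitable holomorphic amplitude, and the remainders $r_j$ are controlled via the solvability of the $\db$- and $\d$-equations with the $L^p$ potential (this is where $p>2$, equivalently $s>0$, enters — the Cauchy transform maps $L^p$ into a space of bounded functions by Sobolev embedding). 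One needs the a priori bound $\norm{q_j}_{W^s_2} \le M$ to get remainder estimates of the form $\norm{r_j}_{L^2} \le C\tau^{-\delta}$ for some $\delta > 0$, possibly with a polynomial-in-$\tau$ loss absorbed later.

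The second step is the stationary-phase / Bukhgeim oscillatory identity. Plugging these CGO solutions into the bilinear form, the phase in $\int_X u_1 (q_1-q_2) u_2\, dx$ becomes $e^{2i\tau \,\mathrm{Im}\,(z-z_0)^2}$ times an amplitude, so stationary phase localizes the integral near $z_0$ and yields, up to lower-order terms, $\frac{c}{\tau}(q_1-q_2)(z_0)$ plus error terms that are $o(1/\tau)$ in an averaged ($L^2$ in $z_0$) sense. Rearranging, $\abs{(q_1-q_2)(z_0)} \le C\tau\, d(\mathcal C_{q_1},\mathcal C_{q_2}) + (\text{error}(\tau))$, and since $u_j$ must be renormalized to have unit $W^1_2$ norm we pick up a factor polynomial in $\tau$ multiplying $d$. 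Integrating in $z_0$ over $X$ (using that the remainder and stationary-phase error terms are controlled in $L^2_{z_0}$), we obtain a bound of the shape $\norm{q_1-q_2}_{L^2(X)} \le C\bigl(\tau^{N} d(\mathcal C_{q_1},\mathcal C_{q_2}) + \tau^{-s/2}\bigr)$ for suitable $N$, valid for all $\tau \ge \tau_0$. The Sobolev regularity $q_1 - q_2 \in \mathaccent'27{W}^s_2$ together with interpolation is what converts the crude remainder estimate into the precise $\tau^{-s/2}$ decay — roughly, one splits $q_1-q_2$ into low and high frequencies, estimates the high-frequency part by $\tau^{-s/2}\norm{q_1-q_2}_{W^s_2}$, and handles the low-frequency part by the stationary-phase analysis; the hypothesis that the difference lies in the \emph{closure} of $C_0^\infty$ is needed so that the CGO construction and the boundary terms behave well under this truncation (and to exclude the exceptional exponent $s = 1/2$).

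The final step is optimization in $\tau$. Setting $d := d(\mathcal C_{q_1},\mathcal C_{q_2})$ and balancing $\tau^N d$ against $\tau^{-s/2}$ would give a power-type rate, but because the honest remainder estimates are only logarithmically good — the CGO construction loses like $(\ln\tau)^{\text{something}}$ or the admissible range of $\tau$ is limited by $d$ exponentially — the correct choice is $\tau \sim \ln(1/d)$, which yields $\norm{q_1-q_2}_{L^2} \le C(1 + \ln(1/d))^{-s/2}$ when $d < 1$; for $d \ge 1$ the trivial bound $\norm{q_1-q_2}_{L^2} \le C \le Cd$ from the a priori estimate $\norm{q_j}_{W^s_2}\le M$ finishes the case distinction. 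The main obstacle, and the technical heart of the argument, is getting the remainder terms $r_j$ and the stationary-phase errors to decay at the sharp rate $\tau^{-s/2}$ \emph{uniformly and in the $L^2_{z_0}$ norm} while keeping explicit track of the polynomial $\tau$-factor coming from the $W^1_2$-renormalization of $u_j$; this requires careful mapping properties of the Cauchy transform between weighted or fractional Sobolev spaces and is precisely where the simplification over \cite{Bla} and \cite{IY} is claimed.
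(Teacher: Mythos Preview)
Your overall architecture (Bukhgeim CGO solutions, stationary phase in $L^2_{x_0}$, optimization in $\tau$) matches the paper, but there is a genuine gap in the bookkeeping that drives the final rate. You write that renormalizing $u_j$ to unit $W^1_2$ norm costs a factor \emph{polynomial} in $\tau$, leading to an inequality of the form $\norm{q_1-q_2}_{L^2}\le C(\tau^N d+\tau^{-s/2})$; you then attribute the logarithmic rate to some vague ``logarithmic loss'' in the remainder estimates or an exponential restriction on the admissible $\tau$. This is not the mechanism. The phase $e^{i\tau\Phi}$ with $\Phi=(z-z_0)^2$ has modulus $e^{-\tau\operatorname{Im}(z-z_0)^2}$, which is genuinely exponentially large on half of $X$, so $\norm{u_j}_{W^1_2(X)}\le C e^{4R^2\tau}$ and no better. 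Consequently the data term carries an \emph{exponential} factor, and the correct intermediate estimate is
\[
\norm{q_1-q_2}_{L^2(X)}\le C\bigl(e^{R_0\tau}\,d(\mathcal C_{q_1},\mathcal C_{q_2})+\tau^{-s/2}\bigr),\qquad R_0=8R^2+1.
\]
Balancing $e^{R_0\tau}d$ against $\tau^{-s/2}$ forces $\tau\sim\ln(1/d)$ and yields the logarithmic bound directly; with your polynomial factor the optimization would give a H\"older rate, which is known to be false by Mandache's instability result.

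A second, smaller point: the hypothesis $q_1-q_2\in\mathaccent'27{W}^s_2(X)$ is not used to tame boundary terms in the CGO construction as you suggest. Its role is that the zero extension $E_0(q_1-q_2)$ then lies in $W^s_2(\R^2)$ (this is exactly where $s=\tfrac12$ must be excluded), so that the $L^2_{x_0}$ stationary-phase lemma
\[
\Bigl\|Q-\tfrac{2\tau}{\pi}\!\int_{\R^2} e^{i\tau(\Phi+\overline\Phi)}Q\,dx\Bigr\|_{L^2(\R^2;dx_0)}\le 2\tau^{-s/2}\norm{Q}_{W^s_2(\R^2)}
\]
applies with $Q=E_0(q_1-q_2)$. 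This lemma, proved by a one-line Fourier multiplier computation, is what produces the sharp $\tau^{-s/2}$ without any frequency splitting or interpolation argument of the kind you sketch.
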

Note that when $s<\frac{1}{2}$ no boundary behaviour
is required from the two potentials
(e.g., Adams and Fournier \cite{Adm}, Lions and Magenes \cite{LM}).

In our stability result, we estimate the norm $\Vert q_1-q_2\Vert_{L^2(X)}$ 
under the a priori boundedness of the norm in $\mathaccent'27{W}^s_2(X)$,
while the work \cite{Bla} uses different norms for $q_1-q_2$ and
a priori boundedness and for the norm.  As for the exponent in the 
estimate, our result asserts $-s/2$ which is better than 
$-s/4$ in \cite{Bla}, but it is still controlled by 
a logarithmic rate.

By the theorem \ref{L1}, we see that
$$
\Vert q_1 - q_2\Vert_{L^2(X)}
= O\left(
\left( \ln \frac{1}{d(\mathcal{C}_{q_1}, \mathcal{C}_{q_2})} \right)
^{-s/2} \right)
$$
as $d(\mathcal{C}_{q_1}, \mathcal{C}_{q_2}) \longrightarrow 0$.
Thus the rate of the conditional stability is logarithmic.

By Lemma 3.2 below, from Theorem 2.1, we can derive
\\
{\bf Corollary.}
{\it Under the same assumptions of Theorem 2.1, we further assume that
zero is not an eigenvalue of $L_{q_j}(x,D)$ with the zero Dirichlet boundary
condition.  Let $s \in (0,1]$, and let $q_1, q_2
\in W^{s}_2(X)$ satisfy
$\Vert q_j\Vert_{W^{s}_2(X)} \leq M$ with $M < \infty$
{and $q_1-q_2 \in \mathaccent'27{W}^s_2(X)$}.
Then there exists a constant $C>0$ such that
$$
\Vert q_1 - q_2\Vert_{L^2(X)}
\le  \left\{
\begin{array}{lll}
C\left( 1 + \ln \frac{1}{\Vert \Lambda_{q_1} - \Lambda_{q_2}\Vert
}
\right)^{-s/2}, \thinspace&\mbox{if}\thinspace
\Vert \Lambda_{q_1} - \Lambda_{q_2}\Vert
< 1,\\
C\Vert \Lambda_{q_1} - \Lambda_{q_2}\Vert,
\thinspace &\mbox{if}\thinspace \Vert \Lambda_{q_1} - \Lambda_{q_2}\Vert
\ge 1.
\end{array}
\right.
$$
where $\Vert \Lambda_{q_1} - \Lambda_{q_2}\Vert$ is the norm in 
${\mathcal{L}(W_2^{1/2}(\partial X); W_2^{-1/2}(\partial X))}$.
}

Our second main result is the uniqueness in the recovery of 
the potential for the Schr\"odinger operator :
\begin{theorem}\label{L2}
Let $X \subset \R^2$ be a bounded smooth domain and $q_1, q_2
\in L^{p}(X)$ with $p > 2$. If $\mathcal{C}_{q_1} = \mathcal{C}_{q_2}$,
then $q_1 = q_2$.
\end{theorem}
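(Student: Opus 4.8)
The plan is to follow the Bukhgeim-type argument using complex geometric optics (CGO) solutions concentrated near a point, but with the regularity analysis adapted to $L^p$ potentials, $p>2$. First I would set $q = q_1 - q_2$ (extended by zero outside $X$, so $q \in L^p(\R^2)$ with compact support) and fix an arbitrary point $z_0 \in X$, which by translation we may take to be $0$. The goal is to show $q(z_0) = 0$ for a.e.\ $z_0$; combined with density this gives $q \equiv 0$. To this end I would construct, for a large parameter $\tau > 0$, solutions $u_1 \in \mathcal X_{q_1}$ and $u_2 \in \mathcal X_{q_2}$ of the form
\[
u_1 = e^{\tau \Phi}(1 + r_1), \qquad u_2 = e^{-\tau \overline{\Phi}}(1 + r_2),
\]
where $\Phi(z) = z^2$ (so $\Phi$ is holomorphic with a single nondegenerate critical point at $0$), and the remainders $r_j$ are built by solving $\bar\partial$- and $\partial$-equations with the help of the Faddeev-type Green's operators; here one uses the $L^p$ theory of the Cauchy transform (Hausdorff--Young / Hardy--Littlewood--Sobolev for $p > 2$) to get $r_j \to 0$ in a suitable norm as $\tau \to \infty$. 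Since $\mathcal{C}_{q_1} = \mathcal{C}_{q_2}$, the orthogonality identity gives $\int_X q\, u_1 u_2\, dx = 0$ for all such solutions, hence $\int_X q\, e^{\tau(\Phi - \overline{\Phi})}(1 + r_1)(1 + r_2)\, dx = 0$.

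The second step is stationary phase. Writing $\Phi - \overline{\Phi} = 2i\,\mathrm{Im}(z^2)$, the phase $\tau(\Phi - \overline\Phi)$ is purely imaginary with a nondegenerate critical point at $0$, so the leading contribution of $\int_X q\, e^{2i\tau\,\mathrm{Im}(z^2)}\,dx$ is, up to a nonzero explicit constant, $\frac{C}{\tau} q(0)$ plus lower-order terms --- this is the usual Bukhgeim stationary-phase computation, valid for $q \in L^p$ with $p>2$ because that integrability guarantees the relevant pointwise Lebesgue-point behavior and an $o(1/\tau)$ error after averaging. The terms involving $r_1, r_2$ must be shown to be $o(1/\tau)$: the linear terms $\int q\, e^{i\tau \cdots} r_j$ and the quadratic term $\int q\, e^{i\tau\cdots} r_1 r_2$ are estimated by combining the decay of $\|r_j\|$ from step one with Hölder's inequality, exploiting that $q \in L^p$ leaves room $\|q\|_{L^p}\|r_j\|_{L^{p'}}$ with $p' < 2$, and that the Faddeev Green's operator gains the decay $\tau^{-\varepsilon}$ on $L^{p'} \to L^{p'}$ for $p'$ slightly below $2$. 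Multiplying the whole identity by $\tau$ and letting $\tau \to \infty$ forces $q(0) = 0$.

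The third step is to upgrade ``$q(z_0) = 0$ for the chosen $z_0$'' to ``for a.e.\ $z_0 \in X$'' and conclude $q = 0$ in $L^p$, hence $q_1 = q_2$. This requires that the CGO construction and the error estimates be uniform (or at least locally uniform) in $z_0$ as $z_0$ ranges over $X$, so that the set of admissible $z_0$ is all of $X$ up to a null set; alternatively one phrases the stationary-phase limit as the statement that a certain mollified version of $q$ tends to $0$, and invokes Lebesgue differentiation.

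The main obstacle I anticipate is exactly the control of the remainder terms under the weak regularity hypothesis $q \in L^p$, $p > 2$ only: one must show $\tau \int_X q\, e^{2i\tau \operatorname{Im}(z^2)}(r_1 + r_2 + r_1 r_2)\,dx \to 0$. This is delicate because the naive bound on $\|r_j\|_{L^2}$ is only $O(1)$, not $o(1)$ --- the decay in $\tau$ has to be extracted in an $L^{p'}$ space with $p' < 2$, using that the multiplication operator by $q$ maps $L^2 \to L^{p'}$ boundedly (since $q \in L^p$ with $1/p' = 1/2 + 1/p$), combined with a quantitative decay estimate $\|\bar\partial^{-1}_\tau\|_{L^{p'}\to L^{p'}} = O(\tau^{-\varepsilon})$ for the conjugated Cauchy operator, which is where the condition $p>2$ (equivalently $p' < 2$) is essential and where Bukhgeim's original argument, applied verbatim, only succeeds for $W^1_p$ potentials. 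Assembling these estimates carefully --- presumably via the $L^p$ mapping properties of the Faddeev Green's function and an interpolation/duality argument --- is the technical heart of the proof, and it is the part the present paper streamlines compared to \cite{IY}.
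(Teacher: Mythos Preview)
Your outline follows the pointwise Bukhgeim scheme --- fix $z_0$, run stationary phase at that point, then pass to a.e.\ $z_0$ --- but this is precisely what the paper \emph{avoids}, and for good reason. The claim that $\frac{2\tau}{\pi}\int_X q\,e^{i\tau(\Phi+\overline\Phi)}\,dx \to q(x_0)$ for a.e.\ $x_0$ when $q$ is merely in $L^p$, $p>2$, is not established by Lebesgue-point reasoning (the oscillatory kernel $\frac{2\tau}{\pi}e^{i\tau(\Phi+\overline\Phi)}$ is not an approximate identity in the usual sense), and in fact Astala--Faraco--Rogers \cite{Ast} show that for potentials below $W^{1/2}_2$ the pointwise reconstruction can fail on a set of positive measure. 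The paper's key device (Lemma~\ref{statPhaseLemma}) replaces pointwise convergence by convergence of the $L^2(\R^2;dx_0)$-norm, proved via Plancherel applied to the convolution $\frac{2\tau}{\pi}e^{\pm i\tau(z^2+\overline z^2)}*Q$; this is exactly the simplification over \cite{IY} that the paper advertises, and it removes any need for Egorov's theorem or a.e.\ arguments.

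A second gap is in your CGO ansatz $u_j = e^{\pm\tau\Phi}(1+r_j)$: with only a zeroth-order amplitude the remainder does not decay fast enough to make $\tau\int q\,e^{i\tau(\cdot)}r_j\,dx$ vanish, and your proposed $L^{p'}$ decay of the conjugated Cauchy operator does not by itself close the gap. The paper instead builds in an explicit first-order correction (see~\eqref{robot0}), after which the new remainder satisfies $\sup_{x_0}\|r_j(\cdot,x_0)\|_{L^2}=O(\tau^{-3/2})$ (Proposition~\ref{BukhSolExist2}). Even so, the cross terms are not shown to be $o(1)$ individually: the term involving $p_1p_2+r_1+r_2$ is only bounded by $\frac{1}{2}\|q_1-q_2\|_{L^2(X)}$ and \emph{absorbed} into the left-hand side (inequality~\eqref{32}), while the first-order cross terms carry the factor $\|q_1-q_2\|_{L^2}$ times a negative power of $\tau$ (estimates~\eqref{nem5}, \eqref{grappa}). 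Taking $L^2(X;dx_0)$-norms of both sides of the resulting identity~\eqref{zoloto} and letting $\tau\to\infty$ then yields $\|q_1-q_2\|_{L^2(X)}=0$ directly.
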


The merits for the proof of our unified method are as follows. 
\begin{enumerate}
\item
The proofs of both stability and uniqueness are simplified. 
Bl{\aa}sten \cite{Bla} used
Sobolev spaces where the $L^p$-norm has been replaced by a Lorentz-norm.
We can avoid using the Lorentz-norm by showing a Carleman estimate formulated
using conventional $L^p$-spaces. 
\item
Comparing with Imanuvilov and Yamamoto \cite{IY},
we use a simpler $L^2$-convergent stationary-phase argument which avoids
approximating the potentials by test functions and using Egorov's theorem.
\end{enumerate}

\section{Key lemmas and definitions}
We start this section with the following Lemma:

\begin{lemma}
\label{sameCauchyDataLemma}
Let $X \subset \R^2$ be a bounded Lipschitz domain and $q_1, q_2 \in L^p(X)$, 
$p > 1$. If $\mathcal{C}_{q_1} = \mathcal{C}_{q_2}$, then
\[
\int_X u_1(q_1-q_2)u_2dx = 0
\]
for all $(u_1,u_2)\in \mathcal X_{q_1}\times \mathcal X_{q_2}.$
\end{lemma}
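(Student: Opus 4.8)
The plan is to derive the identity from Green's formula together with the hypothesis $\mathcal{C}_{q_1} = \mathcal{C}_{q_2}$. Fix $u_1 \in \mathcal{X}_{q_1}$ and $u_2 \in \mathcal{X}_{q_2}$. Since the Cauchy data of $u_1$ lie in $\mathcal{C}_{q_1} = \mathcal{C}_{q_2}$, there exists $v \in W^1_2(X)$ with $L_{q_2}(x,D) v = 0$ and $(v|_{\partial X}, \partial_\nu v) = (u_1|_{\partial X}, \partial_\nu u_1)$ in $W^{1/2}_2(\partial X) \times W^{-1/2}_2(\partial X)$. I will use $v$ to absorb the boundary contributions.

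The core computation is the bilinear identity
\[
\int_X u_1(q_1 - q_2) u_2 \, dx = \langle u_1, \partial_\nu u_2 \rangle_{\partial X} - \langle \partial_\nu u_1, u_2 \rangle_{\partial X},
\]
obtained by substituting $q_1 u_1 = -\Delta u_1$ and $q_2 u_2 = -\Delta u_2$ (in the sense of distributions) and integrating by parts twice, the interior Dirichlet terms $\int_X \nabla u_1 \cdot \nabla u_2 \, dx$ cancelling. Because $u_1$ and $v$ have the same Cauchy data, the right-hand side equals $\langle v, \partial_\nu u_2 \rangle_{\partial X} - \langle \partial_\nu v, u_2 \rangle_{\partial X}$; and since $v$ and $u_2$ are both annihilated by $L_{q_2}(x,D)$, a final application of Green's formula gives
\[
\langle v, \partial_\nu u_2 \rangle_{\partial X} - \langle \partial_\nu v, u_2 \rangle_{\partial X} = \int_X (v \Delta u_2 - u_2 \Delta v)\, dx = \int_X(-q_2 u_2 v + q_2 u_2 v)\, dx = 0 .
\]

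The main point requiring care is the justification of these integrations by parts in the low-regularity regime $q_j \in L^p(X)$, $p > 1$, with solutions only in $W^1_2(X)$. Here the two-dimensional Sobolev embedding $W^1_2(X) \hookrightarrow L^r(X)$ for every $r < \infty$ is what saves the day: it forces $q_j u_j \in L^t(X)$ for some $t > 1$, hence $\Delta u_j = -q_j u_j$ lies in the dual of $W^1_2(X)$. This is precisely the regularity under which the normal trace $\partial_\nu u_j \in W^{-1/2}_2(\partial X)$ is well defined and Green's identity $\int_X(\phi \Delta \psi - \psi \Delta \phi)\, dx = \langle \phi, \partial_\nu \psi \rangle_{\partial X} - \langle \partial_\nu \phi, \psi \rangle_{\partial X}$ holds for all $\phi, \psi \in W^1_2(X)$ with $\Delta \phi, \Delta \psi \in (W^1_2(X))^{*}$; I would either invoke this as a standard fact or establish it by density from the smooth case. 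Nothing else is needed, and the argument goes through verbatim on a bounded Lipschitz domain, as stated.
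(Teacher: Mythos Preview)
Your argument is correct and is the standard derivation of the Alessandrini identity: Green's second identity applied to the pair $(u_1,u_2)$ reduces $\int_X u_1(q_1-q_2)u_2\,dx$ to a boundary expression depending only on the Cauchy data of $u_1$ and $u_2$, and the hypothesis $\mathcal C_{q_1}=\mathcal C_{q_2}$ lets you swap $u_1$ for a $q_2$-solution $v$ with the same boundary data, after which a second application of Green's identity kills everything. Your regularity discussion is also to the point: in two dimensions $W^1_2(X)\hookrightarrow L^r(X)$ for every $r<\infty$, so $q_ju_j\in L^t(X)$ for some $t>1$, hence $\Delta u_j\in (W^1_2(X))^*$, and the generalised Green formula with normal traces in $W^{-1/2}_2(\partial X)$ applies on a bounded Lipschitz domain.

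For comparison, the paper does not actually supply a proof of this lemma; it is stated as a preliminary fact. The closest analogue is the proof of Lemma~\ref{boundaryDataDiffLemma} immediately afterwards, where the authors instead introduce an auxiliary $U\in W^1_2(X)$ solving $L_{q_1}(x,D)U=0$ with $U|_{\partial X}=u_2|_{\partial X}$, multiply $\Delta(U-u_2)+q_1(U-u_2)+(q_1-q_2)u_2=0$ by $u_1$, and integrate by parts once. That variant matches only the Dirichlet trace and therefore needs $0\notin\operatorname{Spec}(L_{q_1})$ to guarantee existence of $U$; your construction of $v$ via the equality of the full Cauchy data sets avoids any spectral assumption, which is exactly why Lemma~\ref{sameCauchyDataLemma} holds without the eigenvalue hypothesis required in Lemma~\ref{boundaryDataDiffLemma}.
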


\medskip

\begin{lemma}
\label{boundaryDataDiffLemma}
Let $X \subset \R^2$ be a bounded smooth domain and $q_1, q_2 \in
L^p(X)$, $p>1$ be potentials.   We  assume that $0$ is not an
eigenvalue of the operator $L_{q_j}(x,D)$, $j=1,2$, with the zero Dirichlet
boundary condition.  Then
$$
d(\mathcal{C}_{q_1}, \mathcal{C}_{q_2})
\le
\norm{\operatorname{Tr}}_{\mathcal L(W_2^1(X); W_2^{1/2}(\partial X))}
^2 \norm{ \Lambda_{q_1} - \Lambda_{q_2}}_{\mathcal L(W_2^{1/2}(\partial X);
W_2^{-1/2}(\partial X))}.
$$
\end{lemma}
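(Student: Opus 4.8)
The plan is to reduce the statement to the Alessandrini-type identity
\[
\int_X u_1(q_1-q_2)u_2\,dx \;=\; \bigl\langle u_1|_{\partial X},\,(\Lambda_{q_2}-\Lambda_{q_1})\,u_2|_{\partial X}\bigr\rangle
\]
for every $(u_1,u_2)\in\mathcal X_{q_1}\times\mathcal X_{q_2}$, where $\langle\cdot,\cdot\rangle$ is the duality pairing between $W_2^{1/2}(\partial X)$ and $W_2^{-1/2}(\partial X)$ extending $\int_{\partial X}fg\,dS$, and then to estimate the right-hand side by a product of operator norms. First I would note that for $u\in\mathcal X_q$ the two-dimensional embedding $W_2^1(X)\hookrightarrow L^r(X)$ (valid for every $r<\infty$), H\"older's inequality and $q\in L^p(X)$ with $p>1$ give $qu\in L^t(X)$ for some $t>1$; hence $\Delta u=-qu\in L^t(X)$ with $t>1$, which is enough for the normal derivative $\partial_\nu u$ to be well defined in $W_2^{-1/2}(\partial X)$ through the weak Green formula (this is already implicit in the definition of $\mathcal C_q$). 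Since $0$ is not a Dirichlet eigenvalue of $L_{q_j}(x,D)$, we have $\partial_\nu u_j=\Lambda_{q_j}\!\left(u_j|_{\partial X}\right)$.

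Then I would invoke the generalized Green identity
\[
\int_X\bigl(v\,\Delta w-w\,\Delta v\bigr)\,dx=\bigl\langle v|_{\partial X},\partial_\nu w\bigr\rangle-\bigl\langle w|_{\partial X},\partial_\nu v\bigr\rangle,
\]
valid for $v,w\in W_2^1(X)$ whose Laplacians lie in $L^t(X)$, $t>1$. Applying it with $v=u_1$, $w=u_2$ and substituting $\Delta u_j=-q_ju_j$ yields $\int_X u_1(q_1-q_2)u_2\,dx=\langle u_1|_{\partial X},\Lambda_{q_2}(u_2|_{\partial X})\rangle-\langle u_2|_{\partial X},\Lambda_{q_1}(u_1|_{\partial X})\rangle$. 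Applying the same identity to two solutions of $L_{q_1}(x,D)w=0$ shows that $\Lambda_{q_1}$ is symmetric with respect to this pairing, since then $v\,\Delta w-w\,\Delta v=-q_1vw+q_1wv=0$; hence $\langle u_2|_{\partial X},\Lambda_{q_1}(u_1|_{\partial X})\rangle=\langle u_1|_{\partial X},\Lambda_{q_1}(u_2|_{\partial X})\rangle$, and the displayed identity follows.

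Finally, writing $f_j=u_j|_{\partial X}=\operatorname{Tr}u_j$ and using $\norm{u_j}_{W_2^1(X)}=1$, I would estimate
\[
\abs{\int_X u_1(q_1-q_2)u_2\,dx}\le \norm{f_1}_{W_2^{1/2}(\partial X)}\,\norm{\Lambda_{q_1}-\Lambda_{q_2}}\,\norm{f_2}_{W_2^{1/2}(\partial X)}\le \norm{\operatorname{Tr}}^2\,\norm{\Lambda_{q_1}-\Lambda_{q_2}},
\]
with the operator norms taken in $\mathcal L(W_2^1(X);W_2^{1/2}(\partial X))$ and $\mathcal L(W_2^{1/2}(\partial X);W_2^{-1/2}(\partial X))$; taking the supremum over $(u_1,u_2)$ gives the assertion. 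The only point requiring care is the first one, namely making rigorous sense of $\partial_\nu u_j$ and of the Green identity for merely $L^p$-potentials, but in two dimensions this is routine via the Sobolev embedding above; the rest is an algebraic rearrangement together with the continuity of the trace operator and of $\Lambda_{q_1}-\Lambda_{q_2}$.
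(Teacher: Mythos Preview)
Your argument is correct and leads to the same Alessandrini identity
\[
\int_X u_1(q_1-q_2)u_2\,dx=\bigl\langle u_1|_{\partial X},(\Lambda_{q_2}-\Lambda_{q_1})(u_2|_{\partial X})\bigr\rangle,
\]
followed by the same trace/operator-norm estimate as in the paper. The only difference is in how the identity is derived. The paper introduces an auxiliary solution $U\in W_2^1(X)$ of $L_{q_1}(x,D)U=0$ with $U|_{\partial X}=u_2|_{\partial X}$ (this is where the non-eigenvalue hypothesis is used), and integrates $u_1$ against the equation satisfied by $U-u_2$; since $U-u_2$ vanishes on $\partial X$, only one boundary term survives and it equals $\int_{\partial X}(\Lambda_{q_2}-\Lambda_{q_1})(u_2)\,u_1\,d\sigma$. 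You instead apply Green's second identity directly to $u_1,u_2$ and then invoke the symmetry $\langle f,\Lambda_{q_1}g\rangle=\langle g,\Lambda_{q_1}f\rangle$, which you justify by a second application of Green's identity to two $L_{q_1}$-solutions. Both routes are standard; the paper's auxiliary-function trick packages the symmetry step implicitly (the function $U$ is exactly the $L_{q_1}$-solution with trace $u_2|_{\partial X}$ whose normal derivative is $\Lambda_{q_1}(u_2|_{\partial X})$), while your version makes the role of the DtN symmetry explicit. Either way the final estimate is identical.
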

\begin{proof}
Let $u_1,U \in W_2^1(X)$ satisfy $L_{q_1}(x,D) U = L_{q_1}(x,D) u_1 =0$ in $X$ 
and
$U =  u_2$ on $\partial X$.
Then
$$
\Delta (U-u_2) + q_1(U-u_2) + (q_1-q_2)u_2 = 0 \quad \mbox{in $X$}
$$
and $U-u_2 = 0$ on $\partial X$.  Multiplying by $u_1$, integrating by
parts and using $\Delta u_1 + q_1u_1 = 0$ in $X$ and
$U-u_2 = 0$ on $\partial X$, we have
$$
\int_X u_1(q_1-q_2)u_2dx
= \int_{\partial X} \partial_\nu (u_2 - U)u_1d\sigma.
$$
Now note that $( U, \partial_\nu U) \in \mathcal{C}_{q_1} $ and 
$( u_2, \partial_\nu u_2) \in \mathcal{C}_{q_2}$. This observation allows us 
to switch to the Dirichlet-to-Neumann maps, and so
\begin{multline*}
\abs{\int_{\partial X}(\partial_\nu u_2 - \partial_\nu U) u_1 d\sigma } \\
= \abs{\int_{\partial X}(\Lambda_{q_2} u_2 - \Lambda_{q_1} U)  u_1d\sigma}
= \abs{\int_{\partial X} ((\Lambda_{q_2} - \Lambda_{q_1}) u_2)u_1 d\sigma}
\end{multline*}
because $u_2 =  U$ on $\partial X.$ Now take the supremum over $(u_1,u_2)\in 
\mathcal X_{q_1}\times \mathcal X_{q_2},$ to obtain
\begin{multline*}
\sup_{(u_1,u_2)\in \mathcal X_{q_1}\times \mathcal X_{q_2}}  
\abs{\int_X u_1(q_1-q_2)u_2dx} \\
= \sup_{(u_1,u_2)\in \mathcal X_{q_1}\times \mathcal X_{q_2}} 
\abs{\int_{\partial X}((\Lambda_{q_2} - \Lambda_{q_1}) u_2) u_1d\sigma} \\
\leq \norm{\operatorname{Tr}}_{\mathcal L(W_2^1(X);W_2^{1/2}(\partial X))}^2 
\norm{\Lambda_{q_1} - \Lambda_{q_2}}_{\mathcal L(W_2^{1/2}(\partial X); 
W_2^{-1/2}(\partial X))}.
\end{multline*}
The proof of Lemma \ref{boundaryDataDiffLemma} is complete.
\end{proof}

Henceforth we identify $z_0 = x_{01}+ix_{02} \in \C$ with
$x_0 = (x_{01}, x_{02}) \in \R^2$.

The following lemma plays the important role in the proof of Theorems 2.1
and 2.2.
\begin{lemma}
\label{statPhaseLemma}
Let $\tau > 0$, $0 \leq s \leq 1$ and $Q \in W^{s}_2(\R^2)$, $z_0
\in \C$. Then
\begin{equation}\label{ananas}
\norm{Q - \int_{\R^2} \frac{2 \tau}{\pi} e^{\pm i \tau \left((z-z_0)^2
+ (\overline{z-z_0})^2\right)}Q dx}_{L^2(\R^2;dx_0)}
\leq 2 \tau^{-s/2} \norm{Q}_{W^{s}_2(\R^2)}.
\end{equation}
If $s = 0$, then the left-hand side tends to $0$ as $\tau \to \infty$.
\end{lemma}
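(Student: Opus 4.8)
The plan is to recognize the integral operator as a rescaled stationary-phase (Gaussian oscillatory) convolution and to estimate the symbol of the resulting Fourier multiplier. Write $w = z - z_0$ and observe that $(z-z_0)^2 + (\overline{z-z_0})^2 = 2(x_1 - x_{01})^2 - 2(x_2 - x_{02})^2$, so that in real coordinates the kernel is
\[
K_\tau(y) = \frac{2\tau}{\pi} e^{\pm 2 i \tau(y_1^2 - y_2^2)}, \qquad y = x - x_0,
\]
and the integral in \eqref{ananas} is $(K_\tau * Q)(x_0)$ (as a function of $x_0$, after the change of variables $x \mapsto x_0 - y$ or $x_0 + y$, which only affects a harmless sign). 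Thus the left-hand side of \eqref{ananas} is $\norm{Q - K_\tau * Q}_{L^2}$. First I would compute the Fourier transform $\widehat{K_\tau}$ exactly: the one-dimensional Fresnel integral $\int_{\R} e^{\pm 2i\tau y^2} e^{-i y \eta}\,dy = \sqrt{\pi/(2\tau)}\, e^{\mp i \pi/4} e^{\mp i \eta^2/(8\tau)}$ gives, after multiplying the two one-dimensional factors, a multiplier of the form
\[
m_\tau(\xi) = \widehat{K_\tau}(\xi) = e^{\mp i(\xi_1^2 - \xi_2^2)/(8\tau)},
\]
a unimodular function (the normalization constant $2\tau/\pi$ is exactly what makes $|m_\tau| \equiv 1$, equivalently $\int K_\tau = 1$ in the distributional sense).

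By Plancherel, the left side of \eqref{ananas} squared equals $\int_{\R^2} |1 - m_\tau(\xi)|^2 |\widehat Q(\xi)|^2\, d\xi$. The key pointwise estimate is then $|1 - m_\tau(\xi)| = |1 - e^{i\theta}| = 2|\sin(\theta/2)| \le \min(2, |\theta|)$ where $\theta = \theta(\xi) = \mp(\xi_1^2 - \xi_2^2)/(8\tau)$, hence $|\theta| \le |\xi|^2/(8\tau)$. Interpolating the two bounds, for any $0 \le s \le 1$ we get $|1 - m_\tau(\xi)| \le 2^{1-s} (|\xi|^2/(8\tau))^{s} \le (|\xi|^{2s})\,(2\tau)^{-s}\cdot C$; a cleaner route is to note $|1-m_\tau(\xi)| \le \min(2, |\xi|^2/(8\tau)) \le 2^{1-s}\big(|\xi|^2/(8\tau)\big)^s$ and then check that $2^{1-s} 8^{-s} \le 1$ so that $|1 - m_\tau(\xi)|^2 \le 4 \tau^{-s} |\xi|^{2s}$ (adjusting constants to land exactly on the factor $4 = 2^2$ claimed, using $(1+|\xi|^2)^s \ge |\xi|^{2s}$). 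Substituting back,
\[
\norm{Q - K_\tau * Q}_{L^2}^2 \le 4\tau^{-s} \int_{\R^2} |\xi|^{2s} |\widehat Q(\xi)|^2\, d\xi \le 4\tau^{-s}\norm{Q}_{W^s_2(\R^2)}^2,
\]
which is precisely \eqref{ananas} after taking square roots.

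For the final sentence, when $s = 0$ the multiplier bound degenerates to $|1 - m_\tau| \le 2$, giving no decay, so instead I would use a density argument: $|1 - m_\tau(\xi)|^2 |\widehat Q(\xi)|^2 \le 4 |\widehat Q(\xi)|^2 \in L^1$, and for each fixed $\xi$, $m_\tau(\xi) \to 1$ as $\tau \to \infty$; dominated convergence then forces $\int |1 - m_\tau(\xi)|^2 |\widehat Q(\xi)|^2\, d\xi \to 0$. The main technical point — and the only place one must be a little careful — is the exact evaluation of the complex Gaussian (Fresnel) integral and the bookkeeping of the constant $2\tau/\pi$ so that $m_\tau$ comes out exactly unimodular; once that is pinned down, everything else is Plancherel plus the elementary inequality $|1 - e^{i\theta}| \le \min(2,|\theta|)$. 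I do not expect any genuine obstacle, only the need to track the $\pm$ signs and the change of variables $x \leftrightarrow x_0$ consistently so that the phase really does act as a convolution in $x_0$.
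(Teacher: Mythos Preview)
Your approach is correct and is essentially the paper's: both recognize the integral as a convolution, compute the Fourier multiplier $m_\tau(\xi)=e^{\mp i(\xi_1^2-\xi_2^2)/(8\tau)}$, apply Plancherel, and bound $|1-m_\tau|$ pointwise. One bookkeeping slip: to reach $|1-m_\tau(\xi)|^2\le 4\tau^{-s}|\xi|^{2s}$ you must interpolate $\min(2,|\theta|)\le 2^{1-\alpha}|\theta|^{\alpha}$ with $\alpha=s/2$, not $\alpha=s$ (your stated bound $2^{1-s}|\theta|^s$ yields $\tau^{-2s}$ after squaring, not $\tau^{-s}$); with $\alpha=s/2$ one gets $|1-m_\tau|\le 2^{1-2s}\tau^{-s/2}|\xi|^{s}\le 2\tau^{-s/2}(1+|\xi|^2)^{s/2}$ and the claimed estimate follows. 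The only other differences are cosmetic: the paper justifies the Fresnel transform rigorously via an $e^{-\delta|z|^2}$ regularization and a limit in $\mathcal S'(\R^2)$ (worth adding, since your oscillatory integrals are not absolutely convergent), and for $s=0$ it uses a $C_0^\infty$ approximation rather than your cleaner dominated-convergence argument on the Fourier side.
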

\begin{proof}

First for $\delta>0$, we have
\begin{eqnarray*}
&&\theta_{\delta}(\xi)
:= \mathscr{F}(e^{\pm i\tau(z^2 + \overline{z}^2) - \delta\vert z\vert^2})
(\xi) \\
= && \frac{\pi}{\sqrt{\delta^2+4\tau^2}}
\exp\left(-\frac{\delta\vert\xi\vert^2}{16\tau^2+4\delta^2}\right)
\exp\left( \frac{\mp i\tau(\xi_1^2-\xi_2^2)\tau}{8\tau^2+2\delta^2}\right).
\end{eqnarray*}
The calculations are direct and we refer to
pp.210-211 in Evans \cite{Ev} for example.  Let $\mathcal{S}(\R^2)$ be the
space  rapidly decreasing functions and $\mathcal{S}'(\R^2)$ be the
dual, that is, the space of tempered distributions.
Since
$$
\theta_{\delta} \longrightarrow
\frac{\pi}{2\tau}
\exp\left( \frac{\mp i(\xi_1^2-\xi_2^2)}{8\tau}\right)
= \frac{\pi}{2\tau}\exp\left( \mp \frac{i(\zeta^2 + \overline{\zeta}^2)}
{16\tau}
\right)
$$
and
$$
e^{\pm i\tau(z^2 + \overline{z}^2) - \delta\vert z\vert^2}
\longrightarrow e^{\pm i\tau(z^2 + \overline{z}^2)}
$$
as $\delta \downarrow 0$ in $\mathcal{S}'(\R^2)$ and
$\mathscr{F}$ is continuous from $\mathcal{S}'(\R^2)$ to itself, we
see
$$
\mathscr{F}(e^{\pm i\tau(z^2 + \overline{z}^2)})(\xi)
= \frac{\pi}{2\tau}\exp\left( \mp \frac{i(\zeta^2 + \overline{\zeta}^2)}
{16\tau}
\right)
$$
in $\mathcal{S}'(\R^2)$.  This equality holds for almost all $\xi \in \R^2$,
because the right-hand side is in $L^{\infty}(\R^2)$.

Next let $Q \in C^{\infty}_0(\R^2)$ be arbitrarily chosen.  Then
$$
\mathscr{F}\left(\frac{2\tau}{\pi}e^{\pm i\tau(z^2 + \overline{z}^2)}
* Q\right)
= \exp\left( \frac{\mp i(\zeta^2 + \overline{\zeta}^2)}{16\tau}\right)
\mathscr{F}(Q)(\xi).
$$
Hence by the Plancherel theorem, we have
\begin{eqnarray*}
& \left\Vert Q - \frac{2\tau}{\pi} e^{\pm i\tau(z^2 + \overline{z}^2)} * Q
\right\Vert_{L^2(\R^2)}
= \frac{1}{2\pi} \left\Vert \mathscr{F}Q
- \mathscr{F}\left(
\frac{2\tau}{\pi} e^{\pm i\tau(z^2 + \overline{z}^2)} * Q\right)
\right\Vert_{L^2(\R^2)} \\
&
= \frac{1}{2\pi}\left\Vert
\left(1 - e^{\mp i\frac{\xi^2 + \overline{\xi}^2}{16 \tau}}\right)
\mathscr{F}Q
\right\Vert_{L^2(\R^2)}.
\end{eqnarray*}
On the other hand, we can prove
$$
\vert 1 - e^{\mp i(\zeta^2 + \overline{\zeta}^2)}\vert
\leq 2^{1+s/2} \abs{\xi}^s
$$
for $0 \le s \le 1$ and $\zeta \in \C$.  In fact, if $\vert \xi\vert \ge 1$,
then $\vert 1 - e^{\mp i(\zeta^2 + \overline{\zeta}^2)}\vert
\leq 2 \le 2^{1+s/2}$
and so the inequality is seen.  Let $\vert \xi\vert \le 1$.
Direct calculations yield
$\vert 1 - e^{\mp i(\zeta^2 + \overline{\zeta}^2)}\vert^2
= 4\sin^2(\xi_1^2 - \xi_2^2)$.  Therefore
$$
\vert 1 - e^{\mp i(\zeta^2 + \overline{\zeta}^2)}\vert^2
\le 4\vert \xi_1^2 - \xi_2^2\vert^2
\le  4\vert \xi_1^2+\xi_2^2\vert^2
\le 4\times 2^s\vert \xi\vert^{2s},
$$
where we used $0 \le s \le 1$ and $\vert \xi\vert \le 1$.
Thus we have seen
$\vert 1 - e^{\mp i(\xi^2 + \overline{\xi}^2)}\vert
\leq 2^{1+s/2} \abs{\xi}^s$ for $0 \le s \le 1$ and $\xi \in \C$.

Hence
\begin{eqnarray}\label{morkovka}
&&\left\Vert Q - \frac{2\tau}{\pi} e^{\pm i\tau(z^2 + \overline{z}^2)} * Q
\right\Vert_{L^2(\mathbb{R}^2)}
\le \frac{1}{\pi}2^{s/2}
\left\Vert \left( \frac{\xi}{4\sqrt{\vert\tau\vert}}\right)^s
\mathscr{F}Q\right\Vert_{L^2(\R^2)}\nonumber\\
\le && \frac{1}{\pi}2^{s/2}2^{-2s}\vert \tau\vert^{-s/2}
\Vert (1+\vert \xi\vert^2)^{s/2} \mathscr{F}Q\Vert_{L^2(\R^2)}.
\end{eqnarray}
for each $Q \in C^{\infty}_0(\R^2)$.  Since $C^{\infty}_0(\R^2)$ is dense in
$W^s_2(\R^2)$, passing to the limits, we complete the
proof of Lemma 3.3 for $s>0.$ If $s=0$ and $Q\in L^2(\mathbb{R}^2)$ for any 
positive $\epsilon$  we take  a function  $Q_\epsilon
\in C^\infty_0(\mathbb{R}^2)$ such that $\Vert Q-Q_\epsilon\Vert
_{L^2(\mathbb{R}^2)}\le \epsilon.$ Then (\ref{morkovka}) implies that 
for any positive $\tau$
$$
\left\Vert Q-Q_\epsilon - \frac{2\tau}{\pi} e^{\pm i\tau(z^2 
+ \overline{z}^2)} * (Q-Q_\epsilon)\right\Vert_{L^2(\mathbb{R}^2)}
\le \frac{1}{\pi}\left \Vert Q-Q_\epsilon\right\Vert_{L^2(\mathbb{R}^2)} 
\le \epsilon.
$$
Then applying to the function $Q_\epsilon$ estimate (\ref{ananas}), 
we obtain the statement of our lemma for $s=0.$
\end{proof}

\section{Preliminary estimates}
Let us introduce the
operators:
$$
\bar \partial^{-1}g=-\frac 1\pi\int_X
\frac{g(\xi_1,\xi_2)}{\zeta-z}d\xi_1d\xi_2,\quad
\partial^{-1}g=-\frac 1\pi\int_X
\frac{g(\xi_1,\xi_2)}{\overline\zeta-\overline z}d\xi_1d\xi_2,
$$
where $X\subset \mathbb{R}^2$ is a bounded domain with the smooth boundary.

We have
\begin{proposition}\label{Proposition 3.0}
 {\bf A}) Let $1\le p\le 2$ and $ 1<\gamma<\frac{2p}{2-p}.$ Then
 $\bar\partial^{-1},\partial^{-1}\in
\mathcal L(L^p( X),L^\gamma(X)).$
\newline
{\bf B})Let $1< p<\infty.$ Then  $\bar\partial^{-1},
\partial^{-1}\in \mathcal L(L^p( X),W^1_p(X)).$
\end{proposition}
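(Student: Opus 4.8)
The plan is to realize both operators as convolutions against a kernel of modulus $\tfrac1{\pi|z|}$, truncated to a fixed ball, and then to combine Young's inequality with the classical Calder\'on--Zygmund theory of the Beurling transform. First I would put $R=\operatorname{diam}X$ and let $B_R$ be the ball of radius $R$ about the origin; for $z\in X$ and $g$ supported in $X$ one has $\bar\partial^{-1}g(z)=(\widetilde E*(\chi_X g))(z)$, where $E(z)=\tfrac1{\pi z}$ is a fundamental solution of $\partial_{\bar z}$ and $\widetilde E=E\chi_{B_R}$, which is legitimate since $|z-\xi|\le R$ for all $z,\xi\in X$. Because $|z|^{-1}$ is integrable near the origin in the plane, $\widetilde E\in L^r(\R^2)$ for every $r\in[1,2)$. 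The kernel of $\partial^{-1}$ is the complex conjugate of that of $\bar\partial^{-1}$ and has the same modulus, so every estimate established below for $\bar\partial^{-1}$ transfers verbatim to $\partial^{-1}$, with the roles of $z$ and $\overline z$ interchanged.

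For part A I would simply invoke Young's inequality: $\norm{\widetilde E*(\chi_X g)}_{L^\gamma(\R^2)}\le\norm{\widetilde E}_{L^r(\R^2)}\norm{g}_{L^p(X)}$ whenever $1+\tfrac1\gamma=\tfrac1r+\tfrac1p$ with $r\in[1,2)$. Letting $\tfrac1r$ range over $(\tfrac12,1]$ produces the boundedness $\bar\partial^{-1}\colon L^p(X)\to L^\gamma(X)$ for every $\gamma$ with $p\le\gamma<\tfrac{2p}{2-p}$ (the right endpoint being $+\infty$ when $p=2$). The complementary range $1<\gamma<p$ is then obtained from the case $\gamma=p$ together with the embedding $L^p(X)\hookrightarrow L^\gamma(X)$, which is valid because $|X|<\infty$. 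These two ranges together cover the whole interval $1<\gamma<\tfrac{2p}{2-p}$ asserted in the statement.

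For part B, taking $r=1$ above already gives $\norm{\bar\partial^{-1}g}_{L^p(X)}\le\norm{\widetilde E}_{L^1}\norm{g}_{L^p(X)}$ for all $p\in(1,\infty)$, so what remains is to control the first derivatives. Since $E$ is a fundamental solution of $\partial_{\bar z}$, one has $\partial_{\bar z}\bar\partial^{-1}g=g$ in the distributional sense; and $\partial_z$ of the Cauchy kernel is the principal-value kernel $-\tfrac1\pi\,\mathrm{p.v.}\,z^{-2}$, so that $\partial_z\bar\partial^{-1}g=Sg$, where $S$ is the Beurling--Ahlfors transform. I would check both identities first for $g\in C_0^\infty(X)$ by direct computation and then extend them to $g\in L^p(X)$ by density, using for the second identity the $L^p$-boundedness of $S$ for $1<p<\infty$. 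Consequently, for $1<p<\infty$ the function $\bar\partial^{-1}g$ lies in $L^p(X)$ and has a distributional gradient whose components $g$ and $Sg$ lie in $L^p(X)$; hence $\bar\partial^{-1}g\in W^1_p(X)$ with $\norm{\bar\partial^{-1}g}_{W^1_p(X)}\le C\norm{g}_{L^p(X)}$. The assertion for $\partial^{-1}$ then follows by the symmetry noted in the first paragraph, its derivatives being $g$ and the conjugate Beurling transform of $g$.

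The only substantial ingredient is the $L^p$-boundedness, $1<p<\infty$, of the Beurling transform, i.e. the Calder\'on--Zygmund estimate for the singular integral with kernel $-\tfrac1\pi\,\mathrm{p.v.}\,z^{-2}$; I expect the delicate point to be the passage from smooth data to general $L^p$ data in the principal-value identity $\partial_z\bar\partial^{-1}=S$, since the absolutely convergent identity $\partial_{\bar z}\bar\partial^{-1}=I$ is routine, and, in part A, the bookkeeping needed to match the admissible Young exponents exactly to the open interval $1<\gamma<\tfrac{2p}{2-p}$ via the finite-measure embedding. Everything else is Young's inequality and the elementary integrability of $|z|^{-1}$ in two dimensions.
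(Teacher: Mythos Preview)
Your argument is correct. The paper does not actually prove this proposition: it simply cites Vekua's book, stating that part A) is on p.~47 of \cite{VE} and that part B) follows from Theorem~1.32 (p.~56) of \cite{VE}. What you have written is essentially the standard proof that underlies those references---the Cauchy kernel has modulus $\tfrac{1}{\pi|z|}$, hence is locally in $L^r$ for $r<2$, so Young's inequality handles part A); and differentiating the Cauchy integral produces the identity and the Beurling--Ahlfors transform, whose $L^p$ boundedness for $1<p<\infty$ is the Calder\'on--Zygmund theorem, giving part B). Your bookkeeping with the truncated kernel $\widetilde E=E\chi_{B_R}$ and the finite-measure embedding to cover $1<\gamma<p$ is correct, and the caveat you flag about passing from $C_0^\infty$ to $L^p$ in the principal-value identity $\partial_z\bar\partial^{-1}=S$ is exactly the one nontrivial step. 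In short, you have supplied the details the paper outsources to a citation; there is nothing to correct.
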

A) is proved on p.47 in \cite{VE} and B) can be verified
by using Theorem 1.32 (p.56) in \cite{VE}.
{$\blacksquare$}

\medskip
Henceforth for arbitrarily fixed $z_0 \in \C$, we set
$$
\Phi(z) = \Phi(z;z_0) := (z-z_0)^2
$$
and introduce the operator:
$$
\widetilde {\mathcal R}_{\tau}g = \frac 12 e^{-i\tau(\overline \Phi+\Phi)}
\partial^{-1}(ge^{i\tau(\Phi+\overline \Phi)}).
$$

We set
\begin{eqnarray}\label{lodka}
&& U_0=1,  \quad
U_1=\widetilde{\mathcal R}_\tau(\frac 12(\bar\partial^{-1}q
-\bar\partial^{-1}q(x_0))),\\
&& U_j=\widetilde
{\mathcal R}_\tau(\frac 12\bar\partial^{-1}(qU_{j-1}))\quad \forall j\ge 2.
\end{eqnarray}
We construct a solution to the Schr\"odinger equation in the form
\begin{equation}\label{gavnuk}
 u_1=\sum_{j=0}^\infty e^{i\tau \Phi}(-1)^jU_j.
\end{equation}

Henceforth $C(\epsilon)$ denotes generic constants which are dependent on
not only $s,M,X$ but also $\epsilon$.

We will prove that the infinite series is convergent in
$L^r(X)$ with some $r > 2$.  For it, we show the following propositions.

\begin{proposition}\label{elka}
Let $u\in W^1_p(X)$ for any $p>2.$
Then for any $\epsilon\in(0,1)$ there exists a constant
$C(\epsilon)$  independent of $x_0\in X$ and $\tau$ such that
\begin{equation}\label{KJ}
\tau^{1-\epsilon}\Vert \widetilde{\mathcal R}_\tau u\Vert_{L^2(X)}
+ \tau^{1/p}\Vert \widetilde{\mathcal R}_\tau u\Vert_{L^\infty(X)}\le
C(\epsilon)\Vert u\Vert_{W^1_p(X)}\quad \forall \tau >0.
\end{equation}
\end{proposition}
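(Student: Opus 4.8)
The plan is to estimate $\widetilde{\mathcal R}_\tau u = \frac12 e^{-i\tau(\overline\Phi+\Phi)}\partial^{-1}\big(u e^{i\tau(\Phi+\overline\Phi)}\big)$ by exploiting the oscillation of $e^{i\tau(\Phi+\overline\Phi)}$ in the Cauchy-transform integral. Note that $\Phi+\overline\Phi = (z-z_0)^2 + (\overline{z-z_0})^2 = 2(\xi_1'^2 - \xi_2'^2)$ in shifted real coordinates $\xi' = \xi - x_0$, so the phase is real and $|e^{-i\tau(\overline\Phi+\Phi)}| = 1$; hence pointwise $|\widetilde{\mathcal R}_\tau u(x)| = \frac12 |\partial^{-1}(u e^{i\tau(\Phi+\overline\Phi)})(x)|$. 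The key algebraic observation is that $\partial_{\overline\zeta}\big(e^{i\tau(\Phi+\overline\Phi)}\big) = i\tau\,\overline{(\zeta-z_0)}\,e^{i\tau(\Phi+\overline\Phi)}$ does not vanish, but one can instead write $e^{i\tau(\Phi+\overline\Phi)}$ as $\partial_{\overline\zeta}$ of something only after dividing by the factor $i\tau\,\overline{(\zeta - z_0)}$, which is singular at $\zeta = z_0$. So the efficient route is to integrate by parts in $\partial^{-1}$ directly: since $\partial^{-1}$ is convolution against $-\frac1\pi\,\frac{1}{\overline\zeta - \overline z}$ and $\partial_{\overline\zeta}$ of this kernel is (up to constants) the delta, we move one $\overline\partial$-type derivative off the oscillatory exponential. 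Concretely I would split $X = (X\cap B(x_0,\tau^{-1/2})) \cup (X\setminus B(x_0,\tau^{-1/2}))$: on the small ball use the $L^p \to W^1_p \hookrightarrow L^\infty$ mapping of $\partial^{-1}$ from Proposition 4.1(B) together with Hölder on a set of measure $\lesssim \tau^{-1}$ to gain $\tau^{-1/p}$ (for the $L^\infty$ bound) and a factor from the small $L^2(dx_0)$-measure (for the $L^2$ bound); on the complement, integrate by parts using the identity $e^{i\tau(\Phi+\overline\Phi)} = \big(i\tau\,\overline{(\zeta-z_0)}\big)^{-1}\partial_{\overline\zeta} e^{i\tau(\Phi+\overline\Phi)}$, which is legitimate away from $z_0$, transferring the derivative onto $u/\overline{(\zeta-z_0)}$ and onto the Cauchy kernel, each integration by parts producing a gain of $\tau^{-1}$ against a loss of one inverse power of $|\zeta - z_0| \ge \tau^{-1/2}$, i.e. a net gain of $\tau^{-1/2}$; iterating or balancing the two regions at radius $\tau^{-1/2}$ yields the stated powers $\tau^{1-\epsilon}$ and $\tau^{1/p}$.

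For the $L^\infty$ estimate I would argue: inside $B(x_0,\tau^{-1/2})$ bound $|\partial^{-1}(u e^{i\tau(\cdot)})(x)| \le \frac1\pi \int_{B} \frac{|u(\zeta)|}{|\overline\zeta - \overline z|}\,d\xi \le C \|u\|_{L^p}\,\|\,|\overline\zeta-\overline z|^{-1}\|_{L^{p'}(B)} \le C\|u\|_{L^p}\,\tau^{-1/p'} \cdot \tau^{?}$ — here one must be careful: $\| |\zeta - z|^{-1}\|_{L^{p'}(B(x_0,\tau^{-1/2}))}$ scales like $\tau^{-1/p' + \text{correction}}$, and the clean way is to keep the full $W^1_p$ norm via Proposition 4.1(B) applied to $u\cdot\mathbf{1}_{B}e^{i\tau(\cdot)}$ only after noting $W^1_p(B) \hookrightarrow L^\infty$ with the embedding constant on a ball of radius $\tau^{-1/2}$ behaving like $\tau^{-\frac12(1 - 2/p)} = \tau^{1/p - 1/2}$, which combined with the $\tau$-independent operator norm of $\partial^{-1}$ and the Leibniz contribution $\|\nabla(u e^{i\tau\cdot})\|_{L^p} \lesssim (\|u\|_{W^1_p} + \tau^{1/2}\|u\|_{L^p})\cdot$(volume factor) gives the exponent $\tau^{1/p}$. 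Outside the ball the integration-by-parts gains are strictly better, so the $L^\infty$ bound there is dominated. Assembling gives $\tau^{1/p}\|\widetilde{\mathcal R}_\tau u\|_{L^\infty(X)} \le C\|u\|_{W^1_p(X)}$.

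For the $L^2(X, dx_0)$ estimate the extra subtlety is that $z_0$ is the free variable: we must bound the $L^2$ norm in $x_0$ of the function $x_0 \mapsto \widetilde{\mathcal R}_\tau u(x; x_0)$ evaluated — wait, reading \eqref{lodka}, $\widetilde{\mathcal R}_\tau$ and $\Phi$ depend on $z_0$ but act in the $\zeta$ variable with output a function of $z$; the $L^2(X)$ in \eqref{KJ} is over $z \in X$ with $z_0 = x_0$ fixed inside $X$, and the uniformity is in $x_0$. So for fixed $x_0$ I would: on $B(x_0,\tau^{-1/2})$ use Hölder $\|\partial^{-1}(\cdot)\|_{L^2(X)} \le |X|^{1/2 - 1/\gamma}\|\partial^{-1}(\cdot)\|_{L^\gamma(X)} \le C\|u\mathbf 1_B e^{i\tau\cdot}\|_{L^p} \le C\|u\|_{L^p}|B|^{0} $ — no, one needs the small volume, so instead bound $\|u \mathbf{1}_B\|_{L^2} \le \|u\|_{L^\infty}|B|^{1/2} \lesssim \|u\|_{W^1_p}\tau^{-1/2}$ and then apply the $L^2 \to L^2$-type bound (from Proposition 4.1(A) with $\gamma$ slightly below the critical exponent, losing $\tau^{-\epsilon}$); on the complement use integration by parts as above for a $\tau^{-1}$-type gain. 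Balancing the two contributions at radius $\tau^{-1/2}$ produces $\tau^{-1+\epsilon}$, i.e. $\tau^{1-\epsilon}\|\widetilde{\mathcal R}_\tau u\|_{L^2(X)} \le C(\epsilon)\|u\|_{W^1_p(X)}$, with the $\epsilon$-loss coming precisely from the failure of $\partial^{-1}$ to map $L^2 \to L^2$ (only $L^2 \to L^\gamma$ for $\gamma < \infty$, cf.\ Proposition 4.1). \textbf{The main obstacle} I anticipate is making the integration-by-parts argument on $X \setminus B(x_0,\tau^{-1/2})$ rigorous: one picks up boundary terms on $\partial X$ and on the small sphere $\partial B(x_0,\tau^{-1/2})$, and the singular weight $\overline{(\zeta - z_0)}^{-1}$ interacts with the Cauchy kernel's own singularity at $\zeta = z$, so one must track both singularities simultaneously and check that the sphere-boundary term is of the same order as the claimed bound rather than worse — this is where the careful bookkeeping of powers of $\tau^{-1/2}$ against the measure and the $L^p$-Hölder pairings has to be done honestly.
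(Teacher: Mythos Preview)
Your strategy is exactly the paper's: localize at scale $\tau^{-1/2}$ around $x_0$, use smallness of the support on the near piece together with the mapping properties of $\partial^{-1}$ (Proposition~4.1), and integrate by parts against the oscillatory factor on the far piece. Two execution points differ, and the first resolves precisely the obstacle you flagged. The paper uses a \emph{smooth} cutoff $\rho_\tau(x)=\rho(\sqrt\tau(x-x_0))$ rather than the characteristic function of the ball; this eliminates the troublesome sphere boundary term on $\partial B(x_0,\tau^{-1/2})$ entirely, at the price of an extra interior term carrying $\partial\rho_\tau$, which has size $\sqrt\tau$ but is supported on an annulus of measure $\sim\tau^{-1}$ and is handled by the same $L^{p'}$/Proposition~4.1 bookkeeping. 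Second, the paper integrates by parts in the $\partial_\zeta$ direction rather than $\overline\partial_\zeta$: since the Cauchy kernel $(\overline z-\overline\zeta)^{-1}$ is antiholomorphic in $\zeta$, $\partial_\zeta$ applied to it produces only a delta (a clean local term $(1-\rho_\tau)u/(i\tau\partial\Phi)$), whereas your $\overline\partial_\zeta$ would hit the kernel to give a genuine order-$-2$ singular integral that is harder to control. With these two adjustments the argument closes exactly along the lines you outlined, and the $\epsilon$-loss indeed arises from the sub-critical $L^p\to L^\gamma$ mapping in Proposition~4.1(A), as you said.
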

{\bf Proof.} Let $\rho\in C_0^\infty(B(0,1))$ and
$\rho\vert_{B(0,\frac 12)}=1.$ We set
$\rho_\tau=\rho(\root\of{\tau} (x-x_0)).$ Since $\widetilde{\mathcal
R}_\tau u=\widetilde{\mathcal R}_\tau (\rho_\tau
u)+\widetilde{\mathcal R}_\tau ((1-\rho_\tau) u)$ for any positive
$\epsilon$, there exists $p_0(\epsilon)>1$ such that
 $\Vert e^{i\tau(\Phi+\bar\Phi)}\rho_\tau u\Vert_{L^{p_0(\epsilon)}(X)}\le
 C(\epsilon)\Vert u\Vert_{W^1_p(X)}/\tau^{1-\epsilon}.$ 
Moreover since $\Vert e^{i\tau(\Phi+\bar\Phi)}u\Vert_{L^{\infty}(X)}
\le C\Vert u\Vert_{W^1_p(X)}$ we have
$$
\Vert e^{i\tau(\Phi+\bar\Phi)}\rho_\tau u\Vert_{L^{\infty}(X)}\le
C(\epsilon)\Vert u\Vert_{W^1_p(X)}/\tau^{1-\epsilon}.
$$
Hence applying Proposition \ref{Proposition 3.0} and the Sobolev
embedding theorem, we have
\begin{equation}\label{J4}
\tau^{1-\epsilon}\Vert\widetilde{\mathcal R}_\tau (\rho_\tau u)\Vert_{L^2(X)}
+\tau^{1/p}\Vert\widetilde{\mathcal R}_\tau (\rho_\tau u)\Vert_{L^\infty(X)}
\le
C(\epsilon)\Vert u\Vert_{W^1_p(X)}, \quad
\forall\epsilon\in (0,1).
\end{equation}
Observe that
\begin{multline}\label{ippo}
\int_{
X} \frac{(1-\rho_\tau) u e^{i\tau(\Phi+\overline\Phi)}}{\overline
z-\overline \zeta}d\xi=\int_{X}\frac{(1-\rho_\tau) u\partial
e^{i\tau(\Phi+\overline\Phi)}}{\tau(\overline z-\overline
\zeta)i\partial\Phi}d\xi\\
= \int_{\partial X}\frac{(\nu_1-i\nu_2)(1-\rho_\tau) u
e^{i\tau(\Phi+\overline\Phi)}}{2i\tau(\overline z-\overline
\zeta)\partial\Phi}d\sigma \\
- \int_{X}\frac{1}{\tau(\overline
z-\overline \zeta)}\partial \left(\frac{(1-\rho_\tau)
u}{i\partial\Phi}\right
)e^{i\tau(\Phi+\overline\Phi)}d\xi +\frac{(1-\rho_\tau)u
e^{i\tau(\Phi+\overline\Phi)}}{i\tau
\partial\Phi}.
\end{multline}

Obviously, by the Sobolev embedding theorem, for any positive
$\epsilon$, there exists a constant $C(\epsilon)$ such that
\begin{equation}\label{J6}
\tau^{1-\epsilon}\left\Vert \frac{(1-\rho_\tau)u}{\tau
\partial\Phi}\right\Vert_{L^2(X)}+\tau^{1/2}\left\Vert 
\frac{(1-\rho_\tau)u}{\tau
\partial\Phi}\right\Vert_{L^\infty(X)}
\le C(\epsilon)\Vert u\Vert_{W^1_p(X)}.
\end{equation}
For the second term on the right-hand side of (\ref{ippo}), we have
$$
\left\vert\int_{X}\frac{1}{\tau(\overline z-\overline \zeta)}
\partial \left(\frac{(1-\rho_\tau) u}{\partial\Phi}
\right)e^{i\tau(\Phi+\overline\Phi)}d\xi\right\vert
\le\int_{X}\left\vert\frac{1}{\tau^\frac 12(\overline z-\overline
\zeta)}\left(\frac{\partial\rho(\root\of{\tau}\xi)
u}{\partial\Phi}\right )\right\vert d\xi
$$
$$
+\int_{X}\left\vert\frac{1}{\tau(\overline z-\overline \zeta)}
\left(\frac{(1-\rho_\tau)\partial u}{\partial\Phi}\right
)\right\vert d\xi + \int_{X}\left\vert\frac{2}{\tau(\overline
z-\overline \zeta)}\left(\frac{(1-\rho_\tau)
u}{(\partial\Phi)^2}\right )\right\vert d\xi .
$$
The functions $\frac{(1-\rho_\tau)\partial
u}{\partial\Phi}$ are uniformly bounded in $\tau$ in
$L^{p_1}(X)$ for some $p_1\in (1,2).$ 
Moreover, since $\Vert (1-\rho_\tau)/\partial\Phi\Vert_{L^\infty(X)}
\le C\root\of{\tau}$, the functions  
$\root\of{\tau}\frac{(1-\rho_\tau)\partial u}
{\partial\Phi}$ are uniformly bounded in $\tau$ in
functions  $\frac{(1-\rho_\tau)\partial
u}{\root\of{\tau}\partial\Phi}$ are uniformly bounded in $\tau$ in
$L^{p}(X).$ Applying Proposition
\ref{Proposition 3.0}, we have
\begin{equation}\label{J3}
\tau\left\Vert \partial^{-1} \left(\frac{(1-\rho_\tau)\partial_z
u}{\tau\partial\Phi}\right )\right\Vert_{L^2(X)} \!\!\!+\tau^{1/p}
\left\Vert \partial^{-1} \left(\frac{(1-\rho_\tau)\partial_z
u}{\tau\partial\Phi}\right )\right\Vert_{L^\infty(X)}\!\!\!\le C\Vert
u\Vert_{W^1_p(X)}.
\end{equation}
On the other hand, for any $p_2> 1$ we have
$$
\left\Vert\frac{\partial\rho(\root\of{\tau}\cdot)
u}{\partial\Phi}\right\Vert_{L^{p_2}(X)}\le C\Vert
u\Vert_{C^0(\overline X)} \left\Vert
\frac{1}{\partial\Phi}\right\Vert_{L^{p_2}(B(0,\frac{1}{\root\of{\tau}}))}
\le C \tau^{(2-p_2)/2p_2} \Vert u\Vert_{W^1_p(X)}.
$$

Thanks to this inequality, applying Proposition \ref{Proposition
3.0} again, we have:
\begin{eqnarray}\label{J2}
\tau^{1-\epsilon}\left \Vert\frac{1}{\tau^\frac 12}
\partial^{-1}\left(\frac{\partial\rho(\root\of{\tau}\cdot)
u}{\partial\Phi}\right )\right\Vert_{L^2(X)} &+\tau^{1/p}\left \Vert
\frac{1}{\tau^\frac 12}
\partial^{-1}\left(\frac{\partial\rho(\root\of{\tau}\cdot)
u}{\partial\Phi}\right )\right\Vert_{L^\infty(X)} \nonumber\\
& \le C(\epsilon) \Vert u\Vert_{W^1_p(X)}.
\end{eqnarray}
For any $p_3>1$, we have
\begin{eqnarray*}
&&\left\Vert
\frac{(1-\rho_\tau)u}{(\partial\Phi)^2}\right\Vert_{L^{p_3}(X)}\le
C\Vert u\Vert_{C^0(\overline X)}\left\Vert
\frac{1}{(\partial\Phi)^2}\right\Vert_{L^{p_3}(X\setminus
B(0,\frac{1}{2\root\of{\tau}}))}\\
&& \le C(p_3) \Vert u\Vert_{W^1_p(X)}\tau^{(2p_3-2)/2p_3}.
\end{eqnarray*}
Therefore
\begin{eqnarray}
\tau^{1-\epsilon}\left\Vert\partial^{-1}\left(\frac{(1-\rho_\tau)
u}{\tau(\partial\Phi)^2}\right )\right\Vert_{L^2(X)}&+\tau^{1/p}
\left\Vert\partial^{-1}\left(\frac{(1-\rho_\tau)
u}{\tau(\partial\Phi)^2}\right )\right\Vert_{L^\infty(X)}\nonumber\\
&\le
C(\epsilon)\Vert u\Vert_{W^1_p(X)}. \end{eqnarray}

From the classical representation of the Cauchy integral (see e.g. \cite{Mir} 
p.27)
we obtain
\begin{eqnarray}
\left\Vert \int_{\partial X}\frac{(\nu_1-i\nu_2)(1-\rho_\tau) u
e^{i\tau(\Phi+\overline\Phi)}}{2i\tau(\overline z-\overline
\zeta)\partial\Phi}d\sigma\right\Vert_{L^2(X)} \nonumber\\
\le C\left\Vert \frac{(\nu_1-i\nu_2)(1-\rho_\tau) u
e^{i\tau(\Phi+\overline\Phi)}}{2i\tau\partial\Phi}\right\Vert_{L^1(\partial X)}
                                         \nonumber \\
\le C\left\Vert\frac{(1-\rho_\tau)}{\partial\Phi}\right\Vert
_{L^1(\partial X)}\Vert
u\Vert_{W^1_p(X)}/\tau \le C\Vert
u\Vert_{W^1_p(X)}\ln \tau/\tau.
\end{eqnarray}

By the trace theorem and the Sobolev embedding theorem, for any $p>2$  there
exists a positive $\alpha=\alpha(p)$ such that the trace operator is  
continuous from $W^1_p(X)$ into $C^\alpha(\partial X).$
Using Theorem 1.11 (see p. 22 of \cite{VE}),
for any $\delta\in (0,\alpha(p))$, there exists a constant
$C(\delta)>0$ such that
\begin{eqnarray*}
\left\Vert \int_{\partial X}\frac{(\nu_1-i\nu_2)(1-\rho_\tau) u
e^{i\tau(\Phi+\overline\Phi)}}{2i\tau(\overline z-\overline
\zeta)\partial\Phi}d\sigma\right\Vert_{L^\infty(X)}
\\
\le C(\delta)\left\Vert \frac{(\nu_1-i\nu_2)(1-\rho_\tau) u
e^{i\tau(\Phi+\overline\Phi)}}{2i\tau\partial\Phi}
\right\Vert_{C^\delta(\partial X)}                   \nonumber \\
\le C(\delta)\Vert\frac{(1-\rho_\tau)}{\partial\Phi}
e^{i\tau(\Phi+\overline\Phi)}\Vert_{C^\delta(\partial X)}\Vert
u\Vert_{W^1_p(X)}/\tau.
\end{eqnarray*}
Denote $\mu_{\tau}(x)=\frac{(1-\rho_\tau)}{\partial\Phi}
e^{i\tau(\Phi+\overline\Phi)}.$ Then by the definitions of the functions
$\Phi$ and $\rho_\tau$ (noting that we identify $z_0$
with $x_0$), we estimate
$$
\Vert \mu_{\tau}(\cdot)\Vert_{C^0(\partial X)}
\le C\root\of{\tau}\quad \mbox{and}\quad
\Vert\nabla \mu_{\tau}(\cdot)\Vert_{C^0(\partial X)}
\le C\tau   \quad \forall \tau>1.
$$
Since in view of the mean value theorem, we can estimate
\begin{equation}
 \vert \mu_{\tau}(x) - \mu_{\tau}(x')\vert
= \vert \mu_{\tau}(x) - \mu_{\tau}(x')\vert^{1-\delta}
\vert \mu_{\tau}(x) - \mu_{\tau}(x')\vert^{\delta}
\le C\tau^{\frac{1-\delta}{2}}\tau^\delta\vert x-x'\vert^{\delta}
\end{equation}
and
we obtain
\begin{eqnarray}\label{J1}
\left\Vert \int_{\partial X}\frac{(\nu_1-i\nu_2)(1-\rho_\tau) u
e^{i\tau(\Phi+\overline\Phi)}}{2i\tau(\overline z-\overline
\zeta)\partial\Phi}d\sigma\right\Vert_{L^\infty(X)}
\le C(\delta)\Vert u\Vert_{W^1_p(X)}/\tau^{(1-\delta)/2}.
\end{eqnarray}
From (\ref{J4})-(\ref{J1}) we have (\ref{KJ}). $\blacksquare$

\bigskip
Now we proceed to the proof that the infinite series (\ref{gavnuk}) is
convergent in
$L^r(X)$ for all sufficiently large $\tau.$  Let $\tilde p\in
(2,p).$ By (\ref{KJ}) and Proposition \ref{Proposition 3.0}
and the H\"older inequality, there exists a positive constant
$\delta(\tilde p)$ such that
\begin{equation}\label{sun}
\Vert \widetilde{\mathcal R}_\tau u\Vert_{L^\frac{p\tilde
p}{p-\tilde p}(X)}\le C\Vert u\Vert_{W^1_{\tilde
p}(X)}/\tau^\delta.
\end{equation}
 Using (\ref{sun}) we have
\begin{multline}
\Vert U_{j}\Vert_{L^\frac{p\tilde p}{p-\tilde p}(X)}\le
\frac{C}{\tau^\delta}\Vert \frac 12\bar\partial^{-1}(qU_{j-1})\Vert
_{W^1_{\tilde p}(X)}\\
\le \frac{C}{2\tau^\delta} \Vert
\bar\partial^{-1}\Vert_{\mathcal L(L^{\tilde p}(X)
; W^1_{\tilde p}(X))}\Vert q U_{j-1}\Vert_{L^{\tilde
p}(X)}\\
\le\frac{C}{2\tau^\delta} \Vert
\bar\partial^{-1}\Vert_{\mathcal L(L^{\tilde p}(X)
;W^1_{\tilde p}(X))}\Vert q\Vert_{L^p(X)}\Vert
U_{j-1}\Vert_{L^\frac{\tilde pp}{p-\tilde p}(X)} \\
\le\left(\frac{C \Vert \bar\partial^{-1}\Vert
_{\mathcal L(L^{\tilde p}(X); W^1_{\tilde p}(X))}\Vert
q\Vert_{L^p(X)}}{2\tau^\delta}\right )^{j-1}\Vert
U_{1}\Vert_{L^\frac{p\tilde p}{p-\tilde p}(X)}.
\end{multline}
Therefore there exists $\tau_0$ such that for all $\tau>\tau_0$
$$
\Vert U_{j}\Vert_{L^\frac{p\tilde p}{p-\tilde p}(X)}\le
\frac{1}{2^j} \Vert U_{1}\Vert_{L^\frac{p\tilde p}{p-\tilde
p}(X)} \quad \forall j\ge 2.
$$
Hence the convergence of the series is proved.

\bigskip
Since
\begin{eqnarray*}
&&L_{q}(x,D)(U_je^{i\tau \Phi})=4\bar\partial\partial
(e^{i\tau \Phi}\widetilde R_\tau(\frac 12\bar\partial^{-1}
(qU_{j-1})))+qU_je^{i\tau \Phi}\\
=&& 2\bar\partial(e^{i\tau\Phi}\frac
12\bar\partial^{-1} (qU_{j-1}))+q_1U_je^{i\tau
\Phi}=qU_{j-1}e^{i\tau \Phi}+qU_je^{i\tau \Phi},
\end{eqnarray*}
the infinite series (\ref{gavnuk})
represents the solution to the Schr\"odinger equation. By
Proposition \ref{elka}, we have
\begin{equation}\label{kazi}
\left\Vert \sum_{j=2}^\infty (-1)^jU_j\right\Vert_{L^2(X)}
= O\left(\frac {1}{\tau^\frac 32}\right)\quad \mbox{as}\,\,\tau\rightarrow
+ \infty.
\end{equation}

Besides the estimate  (\ref{kazi}) we need the estimate of the infinite 
series  $ \sum_{j=2}^\infty (-1)^jU_j$ in the space $L^\infty(X).$

By Proposition \ref{elka}, we have
\begin{equation}\label{kazi1}
\left\Vert \sum_{j=2}^\infty (-1)^jU_j\right\Vert_{L^{\infty}(X)}
= O\left(\frac {1}{\tau^\frac 1p}\right)\quad \mbox{as}\,\,\tau\rightarrow 
+\infty.
\end{equation}

\begin{proposition}
\label{BukhSolExist2}
Let $q \in L^p(X)$ and $2<p<\infty.$ Then there exists a positive constant 
$\widehat C(\norm{q}_{L^p(X)})$  independent of $\tau$ and $x_0$ such that  
if $\tau >
\widehat C( \norm{q}_{L^p(X)})$ and $x_0 \in X$, then
there exists $u \in W^1_2(X)$
such that $L_q(x,D) u = 0$ in $X$ and
\begin{equation}\label{robot0}
u(x,x_0) = e^{i\tau\Phi} (1-\frac{1}{4}e^{-i\tau (\bar\Phi+\Phi)}
\partial^{-1}(e^{i\tau (\bar\Phi+\Phi)}(\bar\partial^{-1}q
-\bar\partial^{-1}q(x_0))) + r(x,x_0)),
\end{equation}
and there exists a positive constant $C_1$, independent of $\tau$ and
$x_0\in X$, such that
\begin{equation}\label{lom}
\tau^\frac 32 \sup_{x_0\in X}\Vert r(\cdot,x_0)\Vert_{L^2(X)}
+\tau^{\frac 12+\frac{1}{2p}} \sup_{x_0\in X}\Vert r(\cdot,x_0)\Vert_{L^4(X)}
\le C_1\Vert q\Vert_{L^p(X)},
\end{equation}
\begin{equation}\label{robot1}
\norm{u}_{W^{1}_2(X)} \leq C_1 e^{4R^2 \tau},
\end{equation}
whenever $\abs{x_0}<R$ where $R>0$ is large enough that $\overline X
\subset B(0,R).$
\end{proposition}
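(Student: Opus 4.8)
The plan is to take for $u$ the Neumann series $u_1$ already written down in (\ref{gavnuk}), i.e. to put $u:=\sum_{j=0}^{\infty}(-1)^{j}e^{i\tau\Phi}U_{j}$ with the $U_{j}$ of (\ref{lodka}), and to set $r(x,x_{0}):=\sum_{j=2}^{\infty}(-1)^{j}U_{j}(x,x_{0})$. With this choice (\ref{robot0}) holds by inspection: $U_{0}=1$, and from the definition of $\widetilde{\mathcal R}_{\tau}$ one has $U_{1}=\tfrac14 e^{-i\tau(\bar\Phi+\Phi)}\partial^{-1}\!\bigl(e^{i\tau(\bar\Phi+\Phi)}(\bar\partial^{-1}q-\bar\partial^{-1}q(x_{0}))\bigr)$, so that the three summands in (\ref{robot0}) are precisely $e^{i\tau\Phi}U_{0}$, $-e^{i\tau\Phi}U_{1}$ and $e^{i\tau\Phi}r$. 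It then remains to show that (a) $u\in W^{1}_{2}(X)$ and $L_{q}(x,D)u=0$ in $X$, (b) $r$ obeys (\ref{lom}), and (c) $u$ obeys (\ref{robot1}).

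For (a): the geometric bound established right after (\ref{sun}) produces a constant $\widehat C(\norm{q}_{L^p(X)})$, independent of $\tau$ and $x_{0}$, such that for $\tau>\widehat C(\norm{q}_{L^p(X)})$ the series converges in $L^{p\tilde p/(p-\tilde p)}(X)$ for a fixed $\tilde p\in(2,p)$, hence in $L^{2}(X)$, and — by (\ref{kazi1}) — in $L^{\infty}(X)$. Moreover each $U_{j}$ lies in $W^{1}_{p}(X)$ by Proposition \ref{Proposition 3.0}(B) (since $qU_{j-1}\in L^{p}(X)$ whenever $U_{j-1}\in L^{\infty}(X)$, and $U_{0}=1$), and combining this mapping property with the identity $\partial(e^{i\tau\Phi}\widetilde{\mathcal R}_{\tau}g)=\tfrac12 e^{i\tau\Phi}g$ gives $\norm{U_{j}}_{W^{1}_{p}(X)}\le C\tau\norm{q}_{L^{p}(X)}\norm{U_{j-1}}_{L^{\infty}(X)}$, which is summable; hence $u\in W^{1}_{p}(X)\subset W^{1}_{2}(X)$. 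The equation follows by summing, with the signs $(-1)^{j}$, the identities
\[
L_{q}(x,D)\bigl(e^{i\tau\Phi}U_{j}\bigr)=q\,e^{i\tau\Phi}\bigl(U_{j-1}+U_{j}\bigr),\qquad j\ge 0,
\]
(with the convention $U_{-1}=0$), which hold because $\Phi$ is holomorphic, so $\Delta e^{i\tau\Phi}=4\bar\partial\partial e^{i\tau\Phi}=0$ and, as in the computation preceding (\ref{kazi}), $\Delta(e^{i\tau\Phi}\widetilde{\mathcal R}_{\tau}g)=2e^{i\tau\Phi}\bar\partial g$ (for $j=1$ one uses $\bar\partial(\bar\partial^{-1}q-\bar\partial^{-1}q(x_{0}))=q$); the right-hand sides telescope to $0$.

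For (b): Proposition \ref{Proposition 3.0}(B) gives $\bar\partial^{-1}q\in W^{1}_{p}(X)$ with norm $\le C\norm{q}_{L^p(X)}$, and since $p>2$ the embedding $W^{1}_{p}(X)\hookrightarrow C^{\alpha}(\overline X)$ with $\alpha=1-2/p$ both controls $\abs{\bar\partial^{-1}q(x_{0})}$ uniformly in $x_{0}$ and shows that the argument of $\widetilde{\mathcal R}_{\tau}$ in $U_{1}$ is bounded in $W^{1}_{p}(X)$ by $C\norm{q}_{L^p(X)}$, uniformly in $x_{0}$, and in addition vanishes at $x_{0}$ and is Hölder continuous there. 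Re-running the proof of Proposition \ref{elka} for this function — and exploiting the vanishing at $x_{0}$, which makes the near-$x_{0}$ piece $\widetilde{\mathcal R}_{\tau}(\rho_{\tau}\cdot)$ in the splitting used there gain an extra factor $\tau^{-\alpha/2}$ — one sharpens the bound on $U_{1}$ to (essentially) $\norm{U_{1}}_{L^\infty(X)}\le C\tau^{-1/2}\norm{q}_{L^p(X)}$; then (\ref{KJ}), applied inductively together with $\norm{qU_{j-1}}_{L^p(X)}\le\norm{q}_{L^p(X)}\norm{U_{j-1}}_{L^\infty(X)}$, gives geometric decay of $\norm{U_{j}}_{L^2(X)}$ and $\norm{U_{j}}_{L^\infty(X)}$ for $\tau>\widehat C(\norm{q}_{L^p(X)})$. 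Summation over $j\ge 2$ yields the $L^{2}$ part of (\ref{lom}), $\tau^{3/2}\sup_{x_{0}}\norm{r(\cdot,x_{0})}_{L^2(X)}\le C\norm{q}_{L^p(X)}$, together with (\ref{kazi1}) in the form $\tau^{1/p}\sup_{x_{0}}\norm{r(\cdot,x_{0})}_{L^\infty(X)}\le C\norm{q}_{L^p(X)}$; the $L^{4}$ part of (\ref{lom}) then follows by interpolation, $\norm{r}_{L^4(X)}\le\norm{r}_{L^2(X)}^{1/2}\norm{r}_{L^\infty(X)}^{1/2}\le C\norm{q}_{L^p(X)}\tau^{-3/4-1/(2p)}\le C\norm{q}_{L^p(X)}\tau^{-1/2-1/(2p)}$ for $\tau\ge 1$.

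For (c): since $\Phi(z)=(z-z_{0})^{2}$ we have $\abs{e^{i\tau\Phi(x)}}=\exp\!\bigl(-2\tau(x_{1}-x_{01})(x_{2}-x_{02})\bigr)\le\exp(\tau\abs{x-x_{0}}^{2})\le e^{4R^{2}\tau}$ on $X$ whenever $\abs{x_{0}}<R$ and $\overline X\subset B(0,R)$, so $\norm{e^{i\tau\Phi}}_{L^\infty(X)}\le e^{4R^{2}\tau}$; multiplying by the $W^{1}_{p}$-bounds of (a)–(b) for the factor $1-U_{1}+r$ (which cost only a fixed power of $\tau$) and absorbing that power into the exponent by taking $R$ slightly larger than the radius of a ball strictly containing $\overline X$ gives $\norm{u}_{W^{1}_2(X)}\le C_{1}e^{4R^{2}\tau}$. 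The one genuinely delicate point is the sharp exponent $\tfrac32$ in the $L^{2}$ part of (\ref{lom}): a bare use of (\ref{KJ}) only yields $\norm{r}_{L^2(X)}=O(\tau^{-1-1/p+\epsilon})$, which is too weak, so it is essential to exploit that $\bar\partial^{-1}q-\bar\partial^{-1}q(x_{0})$ vanishes (Hölder-continuously) at the stationary point $x_{0}$ of the phase $\Phi+\bar\Phi$ in order to upgrade the $L^\infty$ bound for $U_{1}$ — and hence the $L^{2}$ bound for $r$ — by the missing half power of $\tau$.
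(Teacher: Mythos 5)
Your proof follows the paper's route in all essentials: $u$ is the Neumann series (\ref{gavnuk}), $r=\sum_{j\ge 2}(-1)^jU_j$, formula (\ref{robot0}) is read off from (\ref{lodka}), the equation $L_q(x,D)u=0$ comes from the telescoping identities $L_q(x,D)(e^{i\tau\Phi}U_j)=qe^{i\tau\Phi}(U_{j-1}+U_j)$, the $L^4$ half of (\ref{lom}) is obtained by interpolating the $L^2$ and $L^\infty$ bounds exactly as in the paper, and (\ref{robot1}) follows from $\abs{e^{i\tau\Phi}}\le e^{4R^2\tau}$ (the paper instead invokes elliptic estimates here, but your direct $W^1_p$ bound on the series is at least as clean). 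The one place where you genuinely depart from the paper is the $L^2$ half of (\ref{lom}): the paper asserts (\ref{kazi}), i.e.\ $\norm{r}_{L^2(X)}=O(\tau^{-3/2})$, ``by Proposition \ref{elka}'', whereas you correctly observe that a bare application of (\ref{KJ}) to $U_2=\widetilde{\mathcal R}_\tau(\tfrac12\bar\partial^{-1}(qU_1))$ together with $\norm{U_1}_{L^\infty(X)}=O(\tau^{-1/p})$ only yields $O(\tau^{-1-1/p+\epsilon})$, which is weaker than $\tau^{-3/2}$ for every $p>2$; your repair --- exploiting that the datum $\bar\partial^{-1}q-\bar\partial^{-1}q(x_0)$ of $U_1$ vanishes H\"older-continuously at the stationary point $x_0$ of the phase, thereby upgrading $\norm{U_1}_{L^\infty(X)}$ to essentially $O(\tau^{-1/2})$ --- supplies an ingredient the paper omits. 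Two small caveats: even with this upgrade the argument produces $O(\tau^{-3/2+\epsilon})$ rather than the literal $\tau^{-3/2}$ of (\ref{lom}), because the $\epsilon$-loss in the $L^2$ part of (\ref{KJ}) persists and because the pieces of Proposition \ref{elka} involving $\partial(\bar\partial^{-1}q)$ and the boundary integral do not benefit from the vanishing of the datum; and your sharpened $\norm{U_1}_{L^\infty(X)}\le C\tau^{-1/2}$ should likewise carry an $\epsilon$. Neither caveat is harmful, since the only facts used downstream (in Theorems \ref{L1} and \ref{L2}) are $\norm{r}_{L^2(X)}=o(\tau^{-1})$ and $\norm{r}_{L^4(X)}=o(\tau^{-1/2})$, which already follow from the crude bound $O(\tau^{-1-1/p+\epsilon})$ interpolated against (\ref{kazi1}); but if you want the exponents of (\ref{lom}) verbatim you must either track the $\epsilon$'s away or weaken the stated rates to $\tau^{3/2-\epsilon}$ and $\tau^{1/2+1/(2p)-\epsilon}$.
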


{\bf Proof.} Above we proved that the infinite series (\ref{gavnuk}) for
all sufficiently large $\tau$ is the solution to the equation
$L_q(x,D) u = 0.$  We set $r(x,x_0)=\sum_{j=2}^\infty (-1)^jU_j.$
Thanks to (\ref{lodka}) we have (\ref{robot0}).
The estimate of the first term in (\ref{lom}) follows from (\ref{kazi}).
By (\ref{kazi}) and (\ref{kazi1}), we have
\begin{eqnarray}
\sup_{x_0\in X}\Vert r(\cdot,x_0)\Vert_{L^4(X)}\le \sup_{x_0\in X}\Vert 
r(\cdot,x_0)\Vert^\frac 12 _{L^2(X)}\sup_{x_0\in X}\Vert r(\cdot,x_0)
\Vert^\frac 12 _{L^\infty(X)}\nonumber\\
\le C\frac{\Vert q\Vert_{L^p(X)}}{\tau^{\frac 12-\frac{1}{2p}}}\tau^{-1/p}
 \le  C\frac{\Vert q\Vert_{L^p(X)}}{\tau^{\frac 12+\frac{1}{2p}}}.
\end{eqnarray}
Finally estimate (\ref{robot1}) follows from (\ref{robot0}), (\ref{lom})
and the classical estimate for elliptic equations. $\blacksquare$

\section{Proof of Theorem \ref{L1}.}
We set $\tau_0=\mbox{max}\,\{\widehat C( \norm{q_1}_{L^p(X)})
\widehat C( \norm{q_2}_{L^p(X)})\}, $
where $\widehat C( \norm{q_k}_{L^p(X)})$  are determined in Proposition 
\ref{BukhSolExist2} and
let $\tau \ge\tau_0$ such that it is larger than $\tau_0$ from 
Proposition \ref{BukhSolExist2}. For point $x_0\in X$ and $\tau\ge \tau_0$ 
let $u_1 \in W^1_2(X)$ be the solution to $L_{q_1}(x,D)u_1 = 0$ given by 
Proposition \ref{BukhSolExist2}.
In particular we have
\begin{equation}\label{mk1}
u_1(x,x_0) = e^{i\tau\Phi}(1  - \frac{1}{4}e^{-i\tau (\bar\Phi+\Phi)}
\partial^{-1}(e^{i\tau (\bar\Phi+\Phi)}(\bar\partial^{-1}q_1
-\bar\partial^{-1}q_1(x_0)))+ r_1(x,x_0)),
\end{equation}
\begin{equation}
\sup_{x_0\in X} \norm{r_1(\cdot,x_0)}_{L^2(X)}\tau^{\frac 32}
+ \sup_{x_0\in X} \norm{r_1(\cdot,x_0)}_{L^4(X)}
\tau^{\frac 12+\frac{1}{2p}} \leq C \norm{q_1}_{L^p(X)},
                                                        \label{mkk1}
                                                        \end{equation}
\begin{equation}
\sup_{x_0\in X}\norm{u_1(\cdot,x_0)}_{W^{1}_2(X)} \leq
Ce^{4R^2 \tau},\label{mk51}
\end{equation}
and there exists a solution $u_2 \in W^1_2(X)$ for $L_{q_2}(x,D)u_2 = 0$
with
\begin{eqnarray}\label{mk2}
u_2(x,x_0) = e^{i\tau\overline{\Phi}}(1
- \frac{1}{4}e^{-i\tau (\bar\Phi+\Phi)}
\bar \partial^{-1}(e^{i\tau (\bar\Phi+\Phi)}(\partial^{-1}q_2
-\partial^{-1}q_2(x_0))) + r_2(x,x_0)),\\
\qquad \sup_{x_0\in X} \norm{r_2(\cdot,x_0)}_{L^2(X)}\tau^{\frac 32}
+ \sup_{x_0\in X} \norm{r_2(\cdot,x_0)}_{L^4(X)}
\tau^{\frac 12+\frac{1}{2p}}
\leq C\Vert q_2\Vert_{L^p(X)} ,\label{mkk2}\\
\sup_{x_0\in X}\norm{u_2(\cdot,x_0)}_{W^{1}_2(X)}
\leq C e^{4R^2 \tau},\label{mk50}
\end{eqnarray}
 where constant $C$ is independent of $\tau$ and $x_0$.
Substituting (\ref{mk1}) and (\ref{mk2}) into $\int_X u_1(q_1-q_2)u_2 dx$ and
using the Fubini theorem on the Cauchy-operators,  we obtain
\begin{eqnarray}\label{crab}
(q_1 - q_2)(x_0) = \left( (q_1 - q_2)(x_0)
- \int_X \frac{2 \tau}{\pi} e^{i \tau (\Phi + \overline{\Phi})}
(q_1 - q_2)(x) dx \right)\nonumber\\
+ \frac{2\tau}{\pi} \int_X u_1(q_1 - q_2)u_2 dx \nonumber\\
-\frac{2\tau}{\pi}\int_X\bar\partial^{-1}(q_1-q_2)(\partial^{-1}q_2
-\partial^{-1}q_2(x_0))e^{i\tau (\bar\Phi+\Phi)}dx\nonumber\\
-\frac{2\tau}{\pi}\int_X\partial^{-1}(q_1-q_2)(\bar\partial^{-1}q_1
-\bar\partial^{-1}q_1(x_0))e^{i\tau (\bar\Phi+\Phi)}dx\nonumber\\
- \frac{2\tau}{\pi} \int_X e^{i \tau (\Phi + \overline{\Phi})}
(q_1 - q_2)(x) (p_1p_2+r_1+r_2)(x,x_0) dx,
\end{eqnarray}
where
\begin{equation}\label{gopnik1}
p_1 = r_1 - \frac{1}{4}e^{-i\tau(\overline{\Phi}+\Phi)}
\Cab( e^{i\tau(\overline{\Phi}
+ \Phi)} ( \Ca q_1 - \Ca q_1 (x_0) ) ),
\end{equation}
\begin{equation}\label{gopnik2}
p_2 = r_2 - \frac{1}{4}e^{-i\tau(\overline{\Phi}+\Phi)}
\bar\partial^{-1}( e^{i\tau(\overline{\Phi}
+ \Phi)} ( \Cab q_2 - \Cab q_2 (x_0) ) ).
\end{equation}

We recall that $q_1 - q_2 \in \mathaccent'27{W}^s_2(X)$ by the assumptions of 
the theorem.
For $s \in (0,1] \setminus \left\{\frac{1}{2} \right\}$ and $q\in
 \mathaccent'27{W}^s_2(X)$,
let $E_0q$ be the extension in $\R^2$ by
the zero extension outside $X$.  Then
$E_0q \in W^s_2(\R^2)$.

We can now deal with the first term. Take the $L^2(X)$-norm with respect
to $x_0$ to obtain
\begin{eqnarray}
\norm{ q_1 - q_2 - \int_X \frac{2 \tau}{\pi} e^{i \tau (\Phi
+ \overline{\Phi})}(q_1 - q_2)(x) dx }_{L^2(X:,dx_0)}\nonumber \\
= \norm{ E_0(q_1 - q_2) - \int_{\R^2} \frac{2 \tau}{\pi} e^{i \tau (\Phi
+ \overline{\Phi})}E_0(q_1 - q_2)(x) dx }_{L^2(\R^2;dx_0)}.
\nonumber
\end{eqnarray}
Applying Lemma \ref{statPhaseLemma} we have
\begin{eqnarray}\label{nem0}\norm{ q_1 - q_2 - \int_X \frac{2 \tau}{\pi} 
e^{i \tau (\Phi + \overline{\Phi})}(q_1 - q_2)(x) dx }_{L^2(X;dx_0)}
\nonumber \\
\leq 2 \tau^{-s/2} \norm{E_0(q_1 - q_2)}_{W^{s}_2(\R^2)} \leq C \tau^{-s/2} 
\norm{q_1-q_2}_{W^{s}_2(X)} \leq 2CM \tau^{-s/2}.
\end{eqnarray}

\medskip
The second term on the right-hand side of (\ref{crab}) is estimated 
by the difference
of the boundary data and the definition of $d(\mathcal{C}_{q_1},
\mathcal{C}_{q_2})$:
\begin{eqnarray}\label{nem1}
\norm{\frac{2\tau}{\pi} \int_X u_1(q_1 - q_2)u_2 dx}_{L^2(X;dx_0)}
\leq C \sup_{x_0 \in X} \abs{ \frac{2\tau}{\pi} \int_X u_1(q_1 - q_2)
u_2 dx }\nonumber \\
\leq C \tau d(\mathcal{C}_{q_1}, \mathcal{C}_{q_2}) 
\sup_{x_0\in X}( \norm{u_1}_{W^{1}_2(X)} \norm{u_2}_{W^{1}_2(X)}) \leq C_{ M} 
e^{\tau(8R^2 + 1)} d(\mathcal{C}_{q_1}, \mathcal{C}_{q_2}).
\end{eqnarray}
Here in order to obtain the last estimate, we used (\ref{mk51})
and (\ref{mk50}).
Applying Lemma \ref{statPhaseLemma} again, we obtain that there exists
$\tilde s>0$ such that

\begin{equation}\label{nem5}
\left\Vert \frac{2\tau}{\pi}\int_X\bar\partial^{-1}(q_1-q_2)
(\partial^{-1}q_2-\partial^{-1}q_2(x_0))e^{i\tau (\bar\Phi+\Phi)}dx
\right\Vert_{L^2(X;dx_0)}
\end{equation}
\begin{eqnarray*}
&&\le\left\Vert \bar\partial^{-1}(q_1-q_2)\partial^{-1}q_2-\frac{2\tau}{\pi}
\int_X\bar\partial^{-1}(q_1-q_2)\partial^{-1}q_2
e^{i\tau (\bar\Phi+\Phi)}dx\right\Vert _{L^2(X;dx_0)} \\
&&+\left\Vert \bar\partial^{-1}(q_1-q_2)\partial^{-1}q_2-\partial^{-1}
q_2\frac{2\tau}{\pi}\int_X\bar\partial^{-1}(q_1-q_2)
e^{i\tau (\bar\Phi+\Phi)}dx\right\Vert_{L^2(X;dx_0)}\\
&&\le \left\Vert
 E_0\bar\partial^{-1}(q_1-q_2) E_0\partial^{-1}q_2-\frac{2\tau}{\pi}
\int_{\R^2} E_0\bar\partial^{-1}(q_1-q_2) E_0\partial^{-1}q_2
e^{i\tau (\bar\Phi+\Phi)}dx\right\Vert _{L^2(\R^2;dx_0)} \\
&&+\left\Vert
E_0\bar\partial^{-1}(q_1-q_2)E_0\partial^{-1}q_2
-E_0\partial^{-1}q_2\frac{2\tau}{\pi}
\int_{\R^2} E_0\bar\partial^{-1}(q_1-q_2)e^{i\tau (\bar\Phi+\Phi)}dx
\right\Vert_{L^2(\R^2;dx_0)}\\
&&\le \frac{C}{\tau^{\tilde s}}\Vert E_0\bar\partial^{-1} (q_1-q_2)E_0
\partial^{-1}q_2\Vert_{W^1_{2}(\mathbb{R}^2)}+\frac{C}{\tau^{\tilde s}}
\Vert E_0 \bar\partial^{-1} (q_1-q_2)\Vert_{W^1_{2}(\mathbb{R}^2)}
\Vert\partial^{-1}q_2\Vert_{L^\infty(X)}\\
&& \le \frac{C'}{\tau^{\tilde s}}\Vert q_1-q_2\Vert
_{L^2(X)}.
\end{eqnarray*}
In a similar way we obtain
\begin{eqnarray}\label{grappa}
\left\Vert \frac{2\tau}{\pi}\int_X\partial^{-1}(q_1-q_2)(\bar\partial^{-1}
q_1-\bar\partial^{-1}q_1(x_0))e^{i\tau (\bar\Phi+\Phi)}dx
\right\Vert_{L^2(X;dx_0)}\nonumber\\
\le \frac{C'}{\tau^{\tilde s}}\Vert q_1-q_2\Vert
_{L^2(X)}.
\end{eqnarray}
\medskip
 Estimating the $L^2$-norm of the last term on the righ-hand side of
(\ref{crab}), we have
\begin{eqnarray}
\mathcal I=\norm{\frac{2\tau}{\pi} \int_X
e^{i \tau (\Phi + \overline{\Phi})}(q_1 - q_2)(x)(p_1p_2 + r_1 + r_2 )(x,x_0)
dx}_{L^2(X;dx_0)}
\nonumber \\
\leq C \sup_{x_0\in X} \frac{2\tau}{\pi} \int_X
\vert (q_1 - q_2)(x)\vert \vert(p_1p_2+r_1+r_2 )(x,x_0)\vert dx.
\nonumber
\end{eqnarray}
Thanks to (\ref{mkk1}) and (\ref{mkk2}), we obtain
\[
\mathcal I \le C\tau \Vert {q_1 - q_2}\Vert_{L^2(X)}
\sup_{x_0\in X}\Vert (p_1 p_2 +r_1 + r_2)(\cdot,x_0)\Vert_{L^2(X)}
\]
$$
\le C\tau \Vert {q_1 - q_2}\Vert_{L^2(X)}
\sup_{x_0\in X}(\Vert p_1 p_2 \Vert_{L^2(X)}+\Vert (r_1 + r_2)(\cdot,x_0)\Vert
_{L^2(X)})
$$
$$
\le C_1\Vert {q_1 - q_2}\Vert_{L^2(X)}\sup_{x_0\in X}(\tau\Vert p_1 p_2 
\Vert_{L^2(X)}+\frac {1}{\root\of\tau}).
$$

By (\ref{mkk1}), (\ref{mkk2}) and Proposition \ref{BukhSolExist2}
\begin{eqnarray}
\sup_{x_0\in X}\Vert p_1 p_2 \Vert_{L^2(X)}
&\le& \sup_{x_0\in X}(\Vert r_1\Vert_{L^4(X)}
\Vert r_2 \Vert_{L^4(X)}\nonumber                \\
&&+ \frac{1}{4}\Vert \Cab( e^{i\tau(\overline{\Phi}
+ \Phi)} ( \Cab q_2 - \Cab q_2 (x_0) ) )\Vert_{L^\infty(X)}\Vert r_1\Vert
_{L^2(X)}\nonumber\\
&&+ \frac{1}{4}
\Vert  \Ca( e^{i\tau(\overline{\Phi}
+ \Phi)} ( \Ca q_1 - \Ca q_1 (x_0) ) )\Vert_{L^\infty(X)}
\Vert r_2 \Vert_{L^2(X)}\nonumber\\
&&+ \frac{1}{16}\Vert  \Ca( e^{i\tau(\overline{\Phi}
+ \Phi)} ( \Ca q_1 - \Ca q_1 (x_0) ) )\Vert_{L^2(X)} \nonumber\\
&&\Vert \Cab( e^{i\tau(\overline{\Phi}
+ \Phi)} ( \Cab q_2 - \Cab q_2 (x_0) ) )\Vert_{L^\infty(X)})\nonumber\\
&\le& C\Big(
\frac {1}{\tau^\frac 32}+\frac{1}{\tau^p}(\Vert r_1\Vert_{L^2(X)}
+\Vert r_2\Vert_{L^2(X)}) \nonumber\\
&&+ \frac{1}{\tau^p} \Vert  \Ca( e^{i\tau(\overline{\Phi}
+ \Phi)} ( \Ca q_1 - \Ca q_1 (x_0) ) )\Vert_{L^2(X)}\Big).
                                          \nonumber
\end{eqnarray}
Applying (\ref{mkk1}), (\ref{mkk2}) and Proposition \ref{elka} with 
$\epsilon=\frac p2$, we obtain:
\begin{equation}
\sup_{x_0\in X}\Vert p_1 p_2 \Vert_{L^2(X)}\le C(\frac {1}{\tau^\frac 32}
+\frac{1}{\tau^p}(\frac{1}{\tau^\frac 32}+\frac{1}{\tau^{1-\frac p2}})).
\end{equation}

Hence there exists $\tau_1$ independent of $z_0$ such that
\begin{equation}\label{32}
\mathcal I\le \frac{1}{2}\Vert {q_1 - q_2}\Vert_{L^2(X)}
\quad\forall\tau\ge\tau_1.
\end{equation}

\medskip
Combining estimates (\ref{nem0})-(\ref{32}) and setting
$R_0 = 8R^2+1$, we obtain
\begin{equation}\label{nem3}
\Vert q_1 - q_2 \Vert_{L^2(X)}
\le C(e^{\tau R_0}d(\mathcal{C}_{q_1}, \mathcal{C}_{q_2})
+ \tau^{-s/2}), \quad \forall \tau \ge \tau_1.
\end{equation}
Replacing $\tau$ and $C$ by $\tau+\tau_1$ and $Ce^{R_0\tau_1}$ respectively,
we have (\ref{nem3}) for all $\tau>0$.  For obtaining the conditional
stability, we should make the right-hand side of (\ref{nem3})
as small as possible by choosing $\tau>0$.
For this we make the following choice of $\tau$ depending on the value of 
$d(\mathcal{C}_{q_1}, \mathcal{C}_{q_2}).$
\\
{\bf Case 1:} $d(\mathcal{C}_{q_1}, \mathcal{C}_{q_2}) < 1$.\\
We choose
$$
\tau = \frac{\alpha}{R_0}\left( 1+
\ln \frac{1}{d(\mathcal{C}_{q_1}, \mathcal{C}_{q_2})}\right) > 0
$$
with arbitrarily fixed $\alpha \in (0,1)$.  Then
$e^{\tau R_0}d(\mathcal{C}_{q_1}, \mathcal{C}_{q_2})
= e^{\alpha}d(\mathcal{C}_{q_1}, \mathcal{C}_{q_2})^{1-\alpha}$ and
$$
\tau^{-s/2} = \left(\frac{R_0}{\alpha}\right)^{s/2}
\left( 1+ \ln \frac{1}{d(\mathcal{C}_{q_1}, \mathcal{C}_{q_2})}
\right)^{-s/2}.
$$
Since for $0 < \alpha < 1$, there exists a constant $C>0$ such that
$\eta^{1-\alpha} \le C\left( 1+  \ln \frac{1}{\eta}\right)^{-s/2}$ for
$0 \le \eta < 1$, with this choice of $\tau$, estimate (\ref{nem3}) yields
$$
\Vert q_1-q_2\Vert_{L^2(X)}
\le C\left( 1+\ln \frac{1}{d(\mathcal{C}_{q_1}, \mathcal{C}_{q_2})}
\right)^{-s/2}.
$$
\\
{\bf Case 2:} $d(\mathcal{C}_{q_1}, \mathcal{C}_{q_2}) \ge 1$.\\
Since  $\Vert q_1\Vert_{W^s_2(X)}\le M$ and $\Vert q_2\Vert_{W^s_2(X)} \le M$,
we have
$\Vert q_1 - q_2\Vert_{L^2(X)} \le 2M \le 2M
d(\mathcal{C}_{q_1}, \mathcal{C}_{q_2})$.

Therefore combining the two cases, we complete the proof of Theorem 2.1.
$\blacksquare$

\section{\bf Proof of theorem \ref{L2}.}
For any point $x_0\in X$ let $u_1,u_2 \in W^1_2(X)$ be the solutions
to the Schr\"odinger equation given by (\ref{mk1}) and (\ref{mk2})
respectively.

Since the Dirichlet-to-Neumann maps are the same,
we have $\int_X (q_1-q_2)u_1u_2dx=0$.
Then plugging formulas (\ref{mk1}) and (\ref{mk2}) into it 
and adding $(q_1-q_2)(x_0)$ to both sides, we have

\begin{eqnarray}\label{zoloto}
(q_1 - q_2)(x_0) = \left( (q_1 - q_2)(x_0)
- \int_X \frac{2 \tau}{\pi} e^{i \tau (\Phi + \overline{\Phi})}(q_1 - q_2)(x) dx \right)\nonumber\\
-\frac{2\tau}{\pi}\int_X\bar\partial^{-1}(q_1-q_2)(\partial^{-1}q_2-\partial^{-1}q_2(x_0))e^{i\tau (\bar\Phi+\Phi)}dx\nonumber\\
-\frac{2\tau}{\pi}\int_X\partial^{-1}(q_1-q_2)(\bar\partial^{-1}q_1-\bar\partial^{-1}q_1(x_0))e^{i\tau (\bar\Phi+\Phi)}dx\nonumber\\
- \frac{2\tau}{\pi} \int_X e^{i \tau (\Phi + \overline{\Phi})}
(q_1 - q_2)(x) (p_1p_2+r_1+r_2)(x,x_0) dx,
\end{eqnarray}
where the functions $p_j$ are determined by (\ref{gopnik1}) and (\ref{gopnik2}).

Since the estimates (\ref{nem5}), (\ref{32}) hold true for all sufficiently
large $\tau$, 
we obtain from (\ref{zoloto}):
\begin{eqnarray}
\Vert q_1-q_2\Vert_{L^2(X)}\le C \norm{ q_1 - q_2 - \int_X \frac{2 \tau}{\pi} e^{i \tau (\Phi + \overline{\Phi})}(q_1 - q_2)(x) dx }_{L^2(X;dx_0)}
\nonumber \\
= C \norm{ E_0(q_1 - q_2) - \int_{\R^2} \frac{2 \tau}{\pi} e^{i \tau (\Phi + \overline{\Phi})}E_0(q_1 - q_2)(x) dx }_{L^2(\R^2;dx_0)}.\nonumber
\end{eqnarray}
In view of
Lemma \ref{statPhaseLemma} we obtain
\begin{equation}\label{1nem0}\nonumber\norm{ q_1 - q_2 - \int_X \frac{2 \tau}{\pi} e^{i \tau (\Phi + \overline{\Phi})}(q_1 - q_2)(x) dx }_{L^2(X;dx_0)}
\rightarrow 0\,\,\mbox{as}\,\,\tau\rightarrow +\infty.
\end{equation}
The proof of the theorem is complete.
$\blacksquare$

{\bf Acknowledgement}.  The authors thank the anonymous referees
for valuable comments.

\end{document}